\newtheorem{thm}{\theoremname}[section]
\newtheorem{prop}[thm]{\propositionname}
\newtheorem{cor}[thm]{\corollaryname}
\newtheorem{df}[thm]{\definitionname}
\newtheorem{lemma}[thm]{\lemmaname}
\newtheorem{ex}[thm]{\examplename}
\newtheorem{remark}[thm]{\remarkname}
\newcommand{\theoremname}{Theorem}
\newcommand{\definitionname}{Definition}
\newcommand{\lemmaname}{Lemma}
\newcommand{\corollaryname}{Corollary}
\newcommand{\axiomname}{Axiom}
\newcommand{\propositionname}{Proposition}
\newcommand{\problemname}{Problem}
\newcommand{\examplename}{Example}
\newcommand{\remarkname}{Remark}
\newcommand{\conjecturename}{Conjecture}
\def\tedsymbol{\vcenter{\hbox{\vrule\@height.5em\@width.5em}}}
\def\ted{{\unskip\nobreak\hfil\penalty50
 \quad\hbox{}\nobreak\hfil \hbox{$\tedsymbol$}
 \parfillskip\z@ \finalhyphendemerits\z@\par}}
\providecommand{\abs}[1]{\lvert#1\rvert}
\title{Higher Tate central extensions via $K$-theory and $\infty$-topos theory}
\author{
\LARGE{Sho Saito}\footnote{Supported by JSPS Research Felowships for Young Scientists.} 
}
\begin{document}
\date{\empty}
\maketitle
\nocite{Harada-01,Katura-05,Nakano-03,Takagi-96,Artin-74,Kuga-68,MP-64,Infeld-96,Borceux-01,Serre-95,Iyanaga-99,Iyanaga-02}
\begin{abstract}
We give a classification theorem of certain geometric objects, called \textit{torsors over the sheaf of $K$-theory spaces}, in terms of Tate vector bundles. 
This allows us to present a very natural and simple, alternative approach to the Tate central extension, which was classically constructed by using the gerbe of determinant theories. 
We use the language of $\infty$-topoi as the theoretical framework, since it has well-developed, extended notions of groups, actions, and torsors, which make it possible to regard the sheaf of $K$-theory spaces as a group object of such kind and to interpret a delooping theorem in $K$-theory as a classification theorem for torsors over it. 
\end{abstract}
\section{Introduction} 
\subsection*{Background and aim}
In the representation theory of loop groups, one often encounters with situations where $\mathbb{G}_m$-central extensions of a loop group are concerned. 
There is a canonical one among such, the \textit{Tate central extension}, which appears as a pullback of a more general construction in the infinite-dimensional linear algebra of Tate vector spaces. 
A topological vector space over a discrete field $k$ is called a \textit{Tate vector space} if it is isomorphic to the direct sum of a discrete space and the dual of a discrete space. 
A typical example of a Tate vector space is the space $k((t))$ of formal Laurent series with the $t$-adic topology. 
If $G$ is a reductive algebraic group and $V$ a finite dimensional representation then there is an induced natural representation of the corresponding loop group $G((t))$ on a Tate vector space $V((t))$. 

The group of automorphisms of a Tate vector space is known to have a canonical $\mathbb{G}_m$-central extension, called the \textit{Tate central extension}, for whose construction we refer the reader to, for example, \cite{kapranov}. 
The Tate central extension is classified by a $\mathbb{G}_m$-gerbe equipped with an action by the automorphism group, but the assignment of this gerbe to each Tate vector space is not canonically compatible with direct sums. 
This led Beilinson et al. \cite{bbe} and Drinfeld \cite{drinfeld} to introduce the notion of a \textit{torsor over a sheaf of Picard groupoids}, enriching the $\mathbb{G}_m$-gerbe classifying the Tate central extension to a $\operatorname{Pic}^{\mathbb{Z}}$-torsor classifying an object that should be called the \textit{categorical Tate central extension} of the automorphism group of a Tate vector bundle by the stack $\operatorname{Pic}^{\mathbb{Z}}$ of $\mathbb{Z}$-graded line bundles. 
See \cite{bbe}, section 2, and \cite{drinfeld}, secion 5, for details. 

They gave the construction of the $\operatorname{Pic}^{\mathbb{Z}}$-torsor by a direct analogy of the classical construction of the plain $\mathbb{G}_m$-gerbe as in \cite{kapranov}, but Drinfeld proposes in section 5.5 of \cite{drinfeld} an interesting idea, which he attributes to Beilinson. 
Their idea, posed as a "somewhat vague picture," roughly says that there should be a more homotopical interpretation of the $\operatorname{Pic}^{\mathbb{Z}}$-torsor classifying the categorical Tate central extension in terms of algebraic $K$-theory. 
Drinfeld's description of their idea remains in a sketchy state (which is why it is called a "vague picture"), and he leaves it as a problem to make it precise. 

The aim of this article is to propose and prove a more precisely and more comprehensively formulated version of Beilinson-Drinfeld's picture, presenting a very natural and simple approach to the Tate central extension via a classification theorem of objects called \textit{torsors over the sheaf of $K$-theory spaces}. 
The theory of $\infty$-topoi, recently developed by Lurie \cite{htt} et al., makes it possible to regard the whole sheaf of $K$-theory spaces (note that the stack of graded line bundles $\operatorname{Pic}^{\mathbb{Z}}$ can be interpreted as a truncation of the $K$-theory sheaf) as a group object, allowing us to meaningfully speak of torsors over it.  
We show that the corresponding classifying space is equivalent to the $K$-theory sheaf of Tate vector bundles, as a geometric consequence of a delooping theorem obtained by the author in \cite{saito} and Drinfeld's theorem that the first negative $K$-group vanishes Nisnevich locally (\cite{drinfeld}, Theorem 3.4). 
This directly leads to a canonical construction of a torsor over the sheaf of $K$-theory spaces to each Tate vector bundle. 
The torsor thus obtained admits a canonical action by the sheaf of automorphisms of the Tate vector bundle, thereby resulting an object that should be called the \textit{$\infty$-categorical Tate central extension} of the automorphism group of the Tate vector bundle by the sheaf of $K$-theory spaces.  

We believe that our approach via a delooping theorem of $K$-theory, or its geometric consequence in an $\infty$-topos where the $K$-theory satisfies descent and the delooped $K$-theory satisfies local connectedness, is the most comprehensive and conceptually appropriate way of treating the Tate central extension. 
We will discuss a possible generalization of the results presented here to more higher dimensional contexts in future work. 

\subsection*{Summary of the results}
Let us give here a more detailed and precise summary of our results. 

Write $\Pi$ for the filtered category of pairs $(i,j)$ of integers with $i\leq j$, where there is a unique morphism $(i,j)\to(i^{\prime},j^{\prime})$ if $i\leq i^{\prime}$ and $j\leq j^{\prime}$. 
For an exact category $\mathcal{A}$, let $\displaystyle\lim_{\longleftrightarrow}\mathcal{A}$ be the full subcategory of $\operatorname{Ind}\operatorname{Pro}\mathcal{A}$ consisting of ind-pro-objects $X=(X_{i,j})_{(i,j)\in\Pi}$, indexed by $\Pi$, satisfying that for every $i\leq j\leq k$ the sequence $$0\to X_{i,j}\to X_{i,k}\to X_{j,k}\to0$$ is a short exact sequence in $\mathcal{A}$. 
If the exact category $\mathcal{A}$ is an extension-closed, full additive subcategory of an abelian category $\mathcal{F}$, then $\displaystyle\lim_{\longleftrightarrow}\mathcal{A}$ is an extension-closed, full additive subcategory of the abelian category $\operatorname{Ind}\operatorname{Pro}\mathcal{F}$, so that $\displaystyle\lim_{\longleftrightarrow}\mathcal{A}$ is endowed with a structure of an exact category. 
See \cite{beilinson}, A.3, and \cite{lcoec}, for details on the exact category $\displaystyle\lim_{\longleftrightarrow}\mathcal{A}$. 

We write $\mathbb{K}$ for Schlichting's non-connective $K$-theory spectrum of an exact category, introduced in \cite{schlichting2}, whose positive homotopy groups are the positive $K$-groups of the exact category, and whose $0$-th homotopy group is the $0$-th $K$-group of the idempotent completion of the exact category, and whose negative homotopy groups recover the classical negative $K$-groups when the exact category is the category of finitely generated projective modules over a ring or the category of vector bundles on a quasi-compact, quasi-separated scheme with an ample family of line bundles. 
See \cite{schlichting2} for details. 
In a recent paper \cite{saito} the author proved the following theorem. 
(Note that idempotent completion causes no change on non-connective $K$-theory.) 
\begin{thm}[\cite{saito}, 1.2]
\label{abstractdelooping}
There is a natural equivalence of sectra between $\mathbb{K}(\mathcal{A})$ and $\Omega\mathbb{K}((\displaystyle\lim_{\longleftrightarrow}\mathcal{A})^{\natural})$, where $(-)^{\natural}$ denotes the idempotent completion.  
\end{thm}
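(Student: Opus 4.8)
The plan is to derive the equivalence from Schlichting's localization theorem for non-connective $K$-theory \cite{schlichting2}, using two localization sequences of exact categories whose middle terms have vanishing $K$-theory by an Eilenberg swindle. I would work with the admissible ind- and pro-categories $\operatorname{Ind}\mathcal{A}$ and $\operatorname{Pro}\mathcal{A}$, realized as extension-closed exact subcategories of $\operatorname{Ind}\operatorname{Pro}\mathcal{F}$, and I would invoke the comparison result of \cite{lcoec} identifying the idempotent completion $(\displaystyle\lim_{\longleftrightarrow}\mathcal{A})^{\natural}$ with the category $\mathcal{A}^{el}$ of elementary Tate objects. It therefore suffices to produce a natural equivalence $\mathbb{K}(\mathcal{A}^{el})\simeq\Sigma\mathbb{K}(\mathcal{A})$.

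First I would show that $\operatorname{Ind}\mathcal{A}$ and $\operatorname{Pro}\mathcal{A}$ are flasque. A countable admissible ind-object admits a countable coproduct inside $\operatorname{Ind}\mathcal{A}$, giving an exact endofunctor $S$ with $S\cong\operatorname{id}\oplus S$; the Eilenberg swindle then forces $\mathbb{K}(\operatorname{Ind}\mathcal{A})\simeq 0$, and dually (using countable products) $\mathbb{K}(\operatorname{Pro}\mathcal{A})\simeq 0$. Here one must check that the infinite sums exist as admissible objects and that the swindle functor respects the exact structure, which is where the countable indexing of $\Pi$ enters.

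Next I would set up the two localization sequences. The lattice description of Tate objects---every elementary Tate object $V$ admits an admissible pro-subobject (a lattice) $L$ with $V/L$ an admissible ind-object---shows that $\operatorname{Pro}\mathcal{A}$ embeds in $\mathcal{A}^{el}$ as a filtering subcategory in Schlichting's sense, with quotient identified up to idempotent completion with $\operatorname{Ind}\mathcal{A}/\mathcal{A}$ (killing the lattices collapses $V$ to $V/L$, and $\operatorname{Ind}\mathcal{A}\cap\operatorname{Pro}\mathcal{A}=\mathcal{A}$). This gives an exact sequence $\operatorname{Pro}\mathcal{A}\to\mathcal{A}^{el}\to(\operatorname{Ind}\mathcal{A}/\mathcal{A})^{\natural}$. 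The second sequence is the evident $\mathcal{A}\to\operatorname{Ind}\mathcal{A}\to(\operatorname{Ind}\mathcal{A}/\mathcal{A})^{\natural}$, with $\mathcal{A}$ a fully exact subcategory. Applying Schlichting's localization theorem to both, the vanishing $\mathbb{K}(\operatorname{Pro}\mathcal{A})\simeq 0$ yields $\mathbb{K}(\mathcal{A}^{el})\simeq\mathbb{K}((\operatorname{Ind}\mathcal{A}/\mathcal{A})^{\natural})$, while the vanishing $\mathbb{K}(\operatorname{Ind}\mathcal{A})\simeq 0$ yields $\mathbb{K}((\operatorname{Ind}\mathcal{A}/\mathcal{A})^{\natural})\simeq\Sigma\mathbb{K}(\mathcal{A})$. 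Composing gives $\mathbb{K}(\mathcal{A}^{el})\simeq\Sigma\mathbb{K}(\mathcal{A})$, and applying $\Omega$ together with the comparison $(\displaystyle\lim_{\longleftrightarrow}\mathcal{A})^{\natural}\simeq\mathcal{A}^{el}$ gives the theorem; naturality follows since every functor used is natural in $\mathcal{A}$.

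The main obstacle I expect is the honest verification that these diagrams are short exact sequences of exact categories in the precise sense required by Schlichting's theorem. Concretely, one must check that $\operatorname{Pro}\mathcal{A}\subset\mathcal{A}^{el}$ and $\mathcal{A}\subset\operatorname{Ind}\mathcal{A}$ are filtering subcategories closed under the appropriate admissible sub-, quotient-, and extension operations, and---most delicately---that the Serre quotient $\mathcal{A}^{el}/\operatorname{Pro}\mathcal{A}$ really agrees with $\operatorname{Ind}\mathcal{A}/\mathcal{A}$ after idempotent completion. The presence of $(-)^{\natural}$ in the statement is forced by exactly this point: the naive quotient of exact categories need not be idempotent complete, and Schlichting's fibration sequence for non-connective $K$-theory is only exact after idempotent completion.
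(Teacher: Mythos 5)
The paper does not actually prove this statement---it is quoted from \cite{saito}---and your sketch reconstructs essentially the proof given in that reference (later reworked in the language of elementary Tate objects in \cite{bgw2}): Schlichting's localization theorem \cite{schlichting2} applied to the embeddings $\operatorname{Pro}\mathcal{A}\subset\displaystyle\lim_{\longleftrightarrow}\mathcal{A}$ and $\mathcal{A}\subset\operatorname{Ind}\mathcal{A}$, with both middle terms killed by Eilenberg swindles and the quotient of the first sequence identified, via the lattice argument, with that of the second, yielding $\mathbb{K}(\displaystyle\lim_{\longleftrightarrow}\mathcal{A})\simeq\Sigma\mathbb{K}(\mathcal{A})$. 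One small correction to your closing remark: non-connective $\mathbb{K}$ is invariant under idempotent completion (as this paper itself notes), so Schlichting's fibration sequence does not need $(-)^{\natural}$ for exactness; the completion in the statement serves only to identify $(\displaystyle\lim_{\longleftrightarrow}\mathcal{A})^{\natural}$ with the honest category of countable-type Tate objects, not to repair the $K$-theoretic localization.
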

\begin{remark}
\begin{enumerate}
\item This was the concluding conjecture of L. Previdi's thesis (\cite{previdi}, Conjecture 5.1.7). 
\item In the case where $\mathcal{A}$ is the category of finitely generated projective $R$-modules, Drinfeld \cite{drinfeld} observes a fact which is essentially the $\pi_{-1}$-part of this equivalence: That is, he observes an isomorphism between the first negative $K$-group of $R$ and the $0$-th $K$-group of his category of Tate $R$-modules (\cite{drinfeld}, Theorem 3.6-(iii)). 
\item Recent work of Br\"{a}unling, Grochenig and Wolfson \cite{bgw2} provides an interpretation of this theorem as an algebraic analogue of the Atiyah-Janich theorem in topological $K$-theory.  
\end{enumerate}
\end{remark}

Let $R$ be a commutative ring, which we assume in the sequel to be noetherian and of finite Krull dimension, and denote by $\mathcal{P}(R)$ the exact category of finitely generated projective $R$-modules.  
Then the idempotent-completed exact category $(\displaystyle\lim_{\longleftrightarrow}\mathcal{P}(R))^{\natural}$ is very close to Drinfeld's category $\operatorname{Tate}_R^{\operatorname{Dr}}$ of Tate $R$-modules (which is denoted by $\mathcal{T}_R$ in \cite{drinfeld}, 3.3.2). 
Indeed, if $(M_{i,j})_{i\leq j}$ is an object of $\displaystyle\lim_{\longleftrightarrow}\mathcal{P}(R)$, the $R$-module $\varinjlim_j\varprojlim_iM_{i,j}$ endowed with the topology induced from the discrete ones on $M_{i,j}$ is an elementary Tate $R$-module in Drinfeld's sense (\cite{drinfeld}, 3.2.1). 
Recent work by Br\"{a}unling, Grochenig, and Wolfson \cite{bgw} shows this induces a fully faithful functor $(\displaystyle\lim_{\longleftrightarrow}\mathcal{P}(R))^{\natural}\hookrightarrow\operatorname{Tate}_R^{\operatorname{Dr}}$, which is an equivalence onto the full subcategory of Tate $R$-modules of countable type (that is, direct summands of elementary Tate $R$-modules $P\oplus Q^{\ast}$ where $P$ and $Q$ are countably generated discrete, projective $R$-modules). 
See \cite{bgw}, Theorem 5.22. 

\begin{df} 
We call $(\displaystyle\lim_{\longleftrightarrow}\mathcal{P}(R))^{\natural}$ the category of {\rm Tate vector bundles} over the affine scheme $\operatorname{Spec}R$. 
\end{df}

We write $\operatorname{Spec}R_{\operatorname{Nis}}$ for the site whose underlying category is the opposite category of \'{e}tale $R$-algebras and $R$-homomorphisms, and whose notion of a covering is given as follows. 
A collection of \'{e}tale morphisms $\{\operatorname{Spec}R^{\prime}_{\alpha}\to\operatorname{Spec}R^{\prime}\}_{\alpha\in A}$ over $\operatorname{Spec}R$ is a covering in $\operatorname{Spec}R_{\operatorname{Nis}}$ if it is the opposite of a family of \'{e}tale $R$-homomorphisms $\{\phi_{\alpha}:R^{\prime}\to R_{\alpha}^{\prime}\}_{\alpha\in A}$ for which there exists a finite sequence of elements $a_1,\ldots,a_n\in R^{\prime}$ such that $(a_1,\ldots,a_n)=R^{\prime}$ and for every $1\leq i\leq n$ there exists an $\alpha\in A$ and an $R$-homomorphism $\psi:R^{\prime}_{\alpha}\to R^{\prime}[\frac{1}{a_i}]/(a_1,\ldots,a_{i-1})$ whose composition with $\phi_{\alpha}:R^{\prime}\to R^{\prime}_{\alpha}$ equals the map $R^{\prime}\to R^{\prime}[\frac{1}{a_i}]/(a_1,\ldots,a_{i-1})$. 
(See \cite{dagxi}, section 1, for details.) 
We refer to $\operatorname{Spec}R_{\operatorname{Nis}}$ as the \textit{small Nisnevich site} of the affine scheme $\operatorname{Spec}R$. 

Denote by $\operatorname{Set}_{\Delta}$ the the category of simplicial sets, which is a combinatorial, simplicial model category with the Kan model structure.  
We write $\operatorname{Set}_{\Delta}^{\operatorname{Spec}R_{\operatorname{Nis}}^{\operatorname{op}}}$ for the combinatorial, simplicial model category of simplicial presheaves on the underlying category of $\operatorname{Spec}R_{\operatorname{Nis}}$ with the injective model structure, and $(\operatorname{Set}_{\Delta}^{\operatorname{Spec}R_{\operatorname{Nis}}^{\operatorname{op}}})^{\circ}$ for its fibrant-cofibrant objects. 
 By Proposition 4.2.4.4 of \cite{htt}, there is an equivalence of $\infty$-categories $$\theta:N(\operatorname{Set}_{\Delta}^{\operatorname{Spec}R_{\operatorname{Nis}}^{\operatorname{op}}})^{\circ}\stackrel{\sim}{\to}\operatorname{Fun}(N\operatorname{Spec}R_{\operatorname{Nis}}^{\operatorname{op}},(\operatorname{Spaces}))=\operatorname{Preshv}_{(\operatorname{Spaces})}(N\operatorname{Spec}R_{\operatorname{Nis}}),$$ where $N$ denotes the simplicial nerve and $(\operatorname{Spaces})$ is the $\infty$-category of spaces, which is by definition the simplicial nerve of the simplicial category of Kan complexes.    
(We write $\operatorname{Preshv}_{(\operatorname{Spaces})}(\mathcal{C})$ for the $\infty$-category of presheaves of spaces on an $\infty$-category $\mathcal{C}$.) 
Let $\operatorname{Set}_{\Delta,\operatorname{loc}}^{\operatorname{Spec}R_{\operatorname{Nis}}^{\operatorname{op}}}$ denote the combinatorial, simplicial model category of simplicial presheaves on the site $\operatorname{Spec}R_{\operatorname{Nis}}$ with respect to Jardine's local model structure \cite{jardine}, and $(\operatorname{Set}_{\Delta,\operatorname{loc}}^{\operatorname{Spec}R_{\operatorname{Nis}}^{\operatorname{op}}})^{\circ}$ its fibrant-cofibrant objects. 
Then Proposition 6.5.2.14 of \cite{htt} shows that the above equivalence $\theta$ restricts to the equivalence $$\theta:N(\operatorname{Set}_{\Delta,\operatorname{loc}}^{\operatorname{Spec}R_{\operatorname{Nis}}^{\operatorname{op}}})^{\circ}\stackrel{\sim}{\to}\operatorname{Shv}_{(\operatorname{Spaces})}(N\operatorname{Spec}R_{\operatorname{Nis}})^{\wedge}\subset\operatorname{Shv}_{(\operatorname{Spaces})}(N\operatorname{Spec}R_{\operatorname{Nis}}),$$ 
where $\operatorname{Shv}_{(\operatorname{Spaces})}(N\operatorname{Spec}R_{\operatorname{Nis}})\subset\operatorname{Preshv}_{(\operatorname{Spaces})}(N\operatorname{Spec}R_{\operatorname{Nis}})$ is the $\infty$-topos of sheaves of spaces on $N\operatorname{Spec}R_{\operatorname{Nis}}$ (see Definition \ref{inftytopos} and Example \ref{sheavesonsites} below), and $(-)^{\wedge}$ denotes its hypercompletion (\cite{htt}, 6.5.2). 

Suppose $R$ is noetherian and of finite Krull dimension. 
Then, by Thomason's Nisnevich descent theorem of non-connective $K$-theory (\cite{tt}, 10.8), the simplicial presheaf on $\operatorname{Spec}R_{\operatorname{Nis}}$ given by $K$-theory spaces $$R^{\prime}\mapsto\Omega^{\infty}\mathbb{K}(R^{\prime})$$ is a fibrant object of $\operatorname{Set}_{\Delta,\operatorname{loc}}^{\operatorname{Spec}R_{\operatorname{Nis}}^{\operatorname{op}}}$, so that by the above equivalence $\theta$ it defines an object of the $\infty$-topos $\operatorname{Shv}_{(\operatorname{Spaces})}(N\operatorname{Spec}R_{\operatorname{Nis}})$. 
\begin{df}
\label{k}
We denote this object by $$\mathcal{K}=\theta(\Omega^{\infty}\mathbb{K}(-))\in\operatorname{ob}\operatorname{Shv}_{(\operatorname{Spaces})}(N\operatorname{Spec}R_{\operatorname{Nis}}).$$ 
\end{df}
Note that a presheaf of spectra satisfies Nisnevich descent if and only if it sends elementary Nisnevich squares to pullback-pushout squares. 
Since the suspension functor $\Sigma$ preserves pullback-pushout squares of spectra, we see that the Nisnevich descent of the non-connective $K$-theory $\mathbb{K}(-)$ implies the Nisnevich descent of $\Sigma\mathbb{K}(-)$, which is equivalent by Theorem \ref{abstractdelooping} to the presheaf $\mathbb{K}((\displaystyle\lim_{\longleftrightarrow}\mathcal{P}(-))^{\natural})$. 
Hence the simplicial presheaf on $\operatorname{Spec}R_{\operatorname{Nis}}$ given by $$R^{\prime}\mapsto\Omega^{\infty}\mathbb{K}((\displaystyle\lim_{\longleftrightarrow}\mathcal{P}(R^{\prime}))^{\natural})$$ is also fibrant in $\operatorname{Set}_{\Delta,\operatorname{loc}}^{\operatorname{Spec}R_{\operatorname{Nis}}^{\operatorname{op}}}$, and thus defines, via the equivalence $\theta$, an object of the $\infty$-topos $\operatorname{Shv}_{(\operatorname{Spaces})}(N\operatorname{Spec}R_{\operatorname{Nis}})$. 
\begin{df}
\label{ktate}
We denote this object by $$\mathcal{K}_{\operatorname{Tate}}=\theta(\Omega^{\infty}\mathbb{K}((\displaystyle\lim_{\longleftrightarrow}\mathcal{P}(-))^{\natural})\in\operatorname{ob}\operatorname{Shv}_{(\operatorname{Spaces})}(N\operatorname{Spec}R_{\operatorname{Nis}}).$$ 
\end{df}

We refer the reader to section 2 for a short exposition on the materials of $\infty$-topos theory employed in this article, which are collected from \cite{htt} and \cite{nss}. 
We in particular make essential use of the notions of group objects, their actions, and torsors, in an $\infty$-topos. 
These notions we recall in section 2, Definitions \ref{group}, \ref{action}, and \ref{torsor}, respectively, following \cite{htt} and \cite{nss}. 
\begin{prop}
\label{Kasagroup}
The object $\mathcal{K}$ is a group object in the $\infty$-topos $\operatorname{Shv}_{(\operatorname{Spaces})}(N\operatorname{Spec}R_{\operatorname{Nis}})$.  
\end{prop}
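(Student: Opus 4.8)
The plan is to exhibit $\mathcal{K}$ as the loop object of a canonically pointed object and then to invoke the standard fact that loop objects in an $\infty$-topos are group objects. Recall (Definition \ref{group}, following \cite{htt} and \cite{nss}) that a group object of an $\infty$-topos $\mathcal{X}$ is a simplicial object $G_{\bullet}\colon N\Delta^{\operatorname{op}}\to\mathcal{X}$ whose term $G_0$ is terminal and whose Segal maps $G_n\to G_1^{\times n}$ are equivalences, and that for any pointed object $u\colon\ast\to Y$ the \v{C}ech nerve $\check{C}_{\bullet}(u)$, with $\check{C}_0(u)=\ast$ and $\check{C}_n(u)\simeq(\Omega Y)^{\times n}$, is such a group object, whose underlying object $\check{C}_1(u)=\ast\times_Y\ast$ is the loop object $\Omega Y$. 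It therefore suffices to identify $\mathcal{K}$ with a loop object of a pointed sheaf.

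The natural candidate for the delooping is $\mathcal{K}_{\operatorname{Tate}}$ from Definition \ref{ktate}. Each infinite loop space $\Omega^{\infty}\mathbb{K}((\lim_{\longleftrightarrow}\mathcal{P}(R^{\prime}))^{\natural})$ is canonically pointed by the zero element, naturally in $R^{\prime}$, so $\mathcal{K}_{\operatorname{Tate}}$ is a pointed object of the $\infty$-topos. By Theorem \ref{abstractdelooping} there is a natural equivalence $\mathbb{K}((\lim_{\longleftrightarrow}\mathcal{P}(R^{\prime}))^{\natural})\simeq\Sigma\mathbb{K}(R^{\prime})$ of spectra; applying $\Omega^{\infty}$ and using that $\Omega^{\infty}$ commutes with $\Omega$ (both being limit-preserving), we get, objectwise, $\Omega\,\Omega^{\infty}\mathbb{K}((\lim_{\longleftrightarrow}\mathcal{P}(R^{\prime}))^{\natural})\simeq\Omega\,\Omega^{\infty}\Sigma\mathbb{K}(R^{\prime})\simeq\Omega^{\infty}\mathbb{K}(R^{\prime})$. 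In other words, the presheaf underlying $\mathcal{K}$ is the objectwise loop space of the presheaf underlying $\mathcal{K}_{\operatorname{Tate}}$. Since one need not even appeal to Theorem \ref{abstractdelooping} for the mere existence of a group structure -- $\Omega^{\infty}\mathbb{K}(R^{\prime})$ is already the underlying infinite loop space of a spectrum, hence objectwise a loop space $\Omega\,\Omega^{\infty}\Sigma\mathbb{K}(R^{\prime})$ -- the delooping theorem here serves to pin down the specific delooping as $\mathcal{K}_{\operatorname{Tate}}$, which is precisely the geometric input needed for the later classification of torsors.

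It remains to see that this loop-object description survives passage to the $\infty$-topos, which is the one point requiring care, though it is formal. The equivalence $\theta$ of $\infty$-categories preserves finite limits and terminal objects, so it carries the objectwise loop object and its \v{C}ech nerve to the corresponding constructions in $\operatorname{Shv}_{(\operatorname{Spaces})}(N\operatorname{Spec}R_{\operatorname{Nis}})$. Concretely, the loop object $\Omega\mathcal{K}_{\operatorname{Tate}}=\ast\times_{\mathcal{K}_{\operatorname{Tate}}}\ast$ is a finite limit computed in the $\infty$-topos; because the inclusion of sheaves into presheaves is a right adjoint to sheafification and hence preserves finite limits, this pullback agrees with the objectwise one, and each level $\check{C}_n\simeq\mathcal{K}^{\times n}$ of the resulting \v{C}ech nerve is again a sheaf, $\mathcal{K}$ being a sheaf by the Nisnevich descent of $\mathbb{K}$ (Definition \ref{k}) and $\operatorname{Shv}_{(\operatorname{Spaces})}(N\operatorname{Spec}R_{\operatorname{Nis}})$ being closed under finite products and containing the terminal object. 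Thus $\mathcal{K}\simeq\Omega\mathcal{K}_{\operatorname{Tate}}$ as objects of the $\infty$-topos, and the \v{C}ech nerve of $\ast\to\mathcal{K}_{\operatorname{Tate}}$ realizes $\mathcal{K}$ as the underlying object of a group object. The main obstacle is exactly this compatibility between the objectwise structure and the ambient $\infty$-topos, and it dissolves once one records that both $\theta$ and the sheaf inclusion preserve finite limits; there is no further essential difficulty.
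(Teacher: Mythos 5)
Your proposal is correct, and its backbone --- transporting the natural equivalence of Theorem \ref{abstractdelooping} through $\theta$ and the limit-preserving inclusion of sheaves into presheaves to identify $\mathcal{K}\simeq\Omega\mathcal{K}_{\operatorname{Tate}}$ --- is precisely the paper's Lemma \ref{loopspace}. The genuine difference lies in how the group structure is extracted. The paper proves Proposition \ref{Kasagroup} jointly with Theorem \ref{geometricdelooping} by applying the equivalence $\Omega:\mathfrak{X}_{\ast,\operatorname{conn}}\leftrightarrows\operatorname{Grp}(\mathfrak{X}):B$ of Theorem \ref{classifyingspace}, which as stated applies only to \emph{connected} pointed objects; it therefore also needs Lemma \ref{pointedconnected}, whose nontrivial content is Drinfeld's theorem that $K_{-1}$ vanishes Nisnevich locally. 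You instead use that the \v{C}ech nerve of any basepoint inclusion $\ast\to Y$ is already a group object in the sense of Definition \ref{group} (a groupoid object with terminal zeroth term), so for the proposition alone neither connectedness nor Drinfeld's theorem is required; and your further remark that even Theorem \ref{abstractdelooping} is dispensable for the bare group structure is sound, since $\Omega^{\infty}\Sigma\mathbb{K}(-)$ is itself local --- the paper notes just before Definition \ref{ktate} that $\Sigma$ preserves the pullback-pushout squares witnessing Nisnevich descent. What the paper's bundled argument buys is the simultaneous identification $B\mathcal{K}\simeq\mathcal{K}_{\operatorname{Tate}}$, for which connectedness is genuinely indispensable (it is what gives $B\Omega\mathcal{K}_{\operatorname{Tate}}\simeq\mathcal{K}_{\operatorname{Tate}}$); your route isolates the proposition under strictly weaker hypotheses, but Lemma \ref{pointedconnected} re-enters the moment one proves Theorem \ref{geometricdelooping}, as your closing remark about pinning down the specific delooping implicitly acknowledges.
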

\begin{df}
We refer as a {\rm torsor over the sheaf of $K$-theory spaces} to a $\mathcal{K}$-torsor over the final object $\operatorname{Spec}R$, where $\mathcal{K}$ is regarded by Proposition \ref{Kasagroup} as a group object in the $\infty$-topos $\operatorname{Shv}_{(\operatorname{Spaces})}(N\operatorname{Spec}R_{\operatorname{Nis}})$. 
\end{df}

In general, for every group object $G$ of an $\infty$-topos $\mathfrak{X}$ there is an object $BG$ that classifies $G$-torsors, in the sense that for each object $X$ of $\mathfrak{X}$ there is an equivalence between the $\infty$-groupoid of $G$-torsors over $X$ and the mapping space from $X$ to $BG$; the object $BG$ is just given by the connected delooping of the group object $G$. 
(Theorem 3.19 of \cite{nss}, recalled in section 2 below as Theorem \ref{3.19}.) 
We call the object $BG$ the \textit{classifying space object} of the group object $G$. 

The following is the geometric incarnation of Theorem \ref{abstractdelooping}, which serves as a classification theorem of torsors over the sheaf of $K$-theory spaces. 
We remark that Drinfeld's theorem on the Nisnevich local vanishing of the first negative $K$-group (\cite{drinfeld}, Theorem 3.4) also plays a crucial role in its proof. 
\begin{thm}
\label{geometricdelooping}
The classifying space object of the group object $\mathcal{K}$ in the $\infty$-topos $\operatorname{Shv}_{(\operatorname{Spaces})}(\operatorname{Spec}R_{\operatorname{Nis}})$ is given by the object $\mathcal{K}_{\operatorname{Tate}}$. 
I.e., there is an equivalence between $B\mathcal{K}$ and $\mathcal{K}_{\operatorname{Tate}}$. 
\end{thm}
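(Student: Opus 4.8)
The plan is to identify $\mathcal{K}_{\operatorname{Tate}}$ as the connected delooping of the group object $\mathcal{K}$ and then to invoke the recognition principle for group objects (Theorem \ref{3.19}). Recall that, via Theorem \ref{3.19}, the classifying space object $B\mathcal{K}$ is characterized up to equivalence as the unique pointed, connected object of the $\infty$-topos whose loop object $\Omega B\mathcal{K}$ recovers $\mathcal{K}$ as a group object. Thus it suffices to produce an equivalence $\Omega\mathcal{K}_{\operatorname{Tate}}\simeq\mathcal{K}$ of group objects and to verify that $\mathcal{K}_{\operatorname{Tate}}$ is pointed and connected.

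First I would construct the looping equivalence. Applying Theorem \ref{abstractdelooping} with $\mathcal{A}=\mathcal{P}(R^{\prime})$ for each \'{e}tale $R$-algebra $R^{\prime}$ gives a natural equivalence of spectra $\mathbb{K}(R^{\prime})\simeq\Omega\mathbb{K}((\lim_{\longleftrightarrow}\mathcal{P}(R^{\prime}))^{\natural})$. Since $\Omega^{\infty}$ commutes with the loop functor, applying $\Omega^{\infty}$ yields a natural equivalence $\Omega^{\infty}\mathbb{K}(R^{\prime})\simeq\Omega\,\Omega^{\infty}\mathbb{K}((\lim_{\longleftrightarrow}\mathcal{P}(R^{\prime}))^{\natural})$ of simplicial presheaves. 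Because the loop functor $\Omega$ is a finite limit, it is computed objectwise and commutes both with the equivalence $\theta$ and with the inclusion of sheaves into presheaves; hence the above descends to an equivalence $\mathcal{K}\simeq\Omega\mathcal{K}_{\operatorname{Tate}}$ in $\operatorname{Shv}_{(\operatorname{Spaces})}(N\operatorname{Spec}R_{\operatorname{Nis}})$. The group structures on both sides are induced from the underlying spectra---on the left this is the structure of Proposition \ref{Kasagroup}, on the right it is the canonical loop-space structure of $\Omega\mathcal{K}_{\operatorname{Tate}}$---and under the identification $\Omega\,\Omega^{\infty}\Sigma\mathbb{K}\simeq\Omega^{\infty}\mathbb{K}$ these agree, so the equivalence is one of group objects.

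It then remains to check that $\mathcal{K}_{\operatorname{Tate}}$ is pointed and connected. Pointedness is immediate from the canonical basepoint of the $K$-theory space given by the class of the zero object. For connectedness I would compute the $0$-th homotopy sheaf: $\pi_0(\mathcal{K}_{\operatorname{Tate}})$ is the Nisnevich sheafification of the presheaf $R^{\prime}\mapsto K_0((\lim_{\longleftrightarrow}\mathcal{P}(R^{\prime}))^{\natural})$, which by Theorem \ref{abstractdelooping} is naturally identified with $R^{\prime}\mapsto\pi_{-1}\mathbb{K}(R^{\prime})=K_{-1}(R^{\prime})$. Here the essential input is Drinfeld's theorem on the Nisnevich-local vanishing of the first negative $K$-group (\cite{drinfeld}, Theorem 3.4): it asserts precisely that this sheafification is trivial, whence $\pi_0(\mathcal{K}_{\operatorname{Tate}})=\ast$ and $\mathcal{K}_{\operatorname{Tate}}$ is connected. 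Since we work in the hypercomplete $\infty$-topos, connectivity is detected on homotopy sheaves, so this computation suffices.

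With both facts established, the recognition principle (Theorem \ref{3.19}) identifies $\mathcal{K}_{\operatorname{Tate}}$, a pointed connected object delooping the group object $\mathcal{K}$, with $B\mathcal{K}$, yielding the asserted equivalence. I expect the connectivity step to be the main obstacle: at the presheaf level the naive delooping $\Omega^{\infty}\Sigma\mathbb{K}(-)$ has $\pi_0=K_{-1}\neq0$ in general, so it is only after sheafifying in the $\infty$-topos and invoking Drinfeld's local vanishing that the object becomes connected and can serve as a classifying object. The remaining points---the compatibility of the looping equivalence with the group structures and the commutation of $\Omega$ with $\theta$ and with sheafification---are formal.
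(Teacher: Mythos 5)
Your proposal is correct and takes essentially the same route as the paper: establish that $\mathcal{K}_{\operatorname{Tate}}$ is pointed and connected (via Theorem \ref{abstractdelooping} identifying the $\pi_0$-presheaf with $K_{-1}$ plus Drinfeld's Nisnevich-local vanishing, exactly as in Lemma \ref{pointedconnected}), produce the objectwise looping equivalence $\Omega\mathcal{K}_{\operatorname{Tate}}\simeq\mathcal{K}$ (Lemma \ref{loopspace}), and conclude by the looping--delooping equivalence. Two minor remarks: the recognition principle you invoke is Theorem \ref{classifyingspace} (\cite{htt}, 7.2.2.11), not Theorem \ref{3.19} (which classifies torsors); and your compatibility check of group structures is not needed, since the paper \emph{defines} the group structure on $\mathcal{K}$ by transport along this very equivalence (Proposition \ref{Kasagroup} is proved simultaneously with the theorem), likewise hypercompleteness plays no role in the connectivity step because connectedness is defined by triviality of the $0$-th homotopy sheaf.
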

\begin{cor}
\label{dm}
Torsors over the sheaf of $K$-theory spaces is classified by points of the space $\mathcal{K}_{\operatorname{Tate}}(\operatorname{Spec}R)=\Omega^{\infty}\mathbb{K}((\displaystyle\lim_{\longleftrightarrow}\mathcal{P}(R))^{\natural})$. 
In particular, a Tate vector bundle $M\in\operatorname{ob}(\displaystyle\lim_{\longleftrightarrow}\mathcal{P}(R))^{\natural}$ defines a torsor $\mathfrak{D}_M$ over the sheaf of $K$-theory spaces. 
\end{cor}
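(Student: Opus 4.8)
The plan is to obtain the statement as a purely formal consequence of the classification Theorem~\ref{3.19} and the geometric delooping Theorem~\ref{geometricdelooping}, so that once those inputs are granted no genuinely new argument is required. First I would invoke Theorem~\ref{3.19}, applied to the group object $\mathcal{K}$ (which is a group object by Proposition~\ref{Kasagroup}) and to the object $X=\operatorname{Spec}R$: this yields an equivalence between the $\infty$-groupoid of $\mathcal{K}$-torsors over $\operatorname{Spec}R$ and the mapping space $\operatorname{Map}(\operatorname{Spec}R,B\mathcal{K})$ in the $\infty$-topos $\operatorname{Shv}_{(\operatorname{Spaces})}(N\operatorname{Spec}R_{\operatorname{Nis}})$. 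By definition a torsor over the sheaf of $K$-theory spaces is exactly such a $\mathcal{K}$-torsor over the final object $\operatorname{Spec}R$, so this reduces the corollary to identifying the mapping space; passing to $\pi_0$ of that space then recovers ``points'' in the sense of equivalence classes of torsors.

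The second step is to identify $\operatorname{Spec}R$ as the final object of the topos and to evaluate the mapping space accordingly. The underlying category of $\operatorname{Spec}R_{\operatorname{Nis}}$ is the opposite of \'etale $R$-algebras, in which $R$ itself is initial; hence $\operatorname{Spec}R$ is terminal in the site, and the representable sheaf it determines is the final object of $\operatorname{Shv}_{(\operatorname{Spaces})}(N\operatorname{Spec}R_{\operatorname{Nis}})$. For any object $F$ of the topos the mapping space out of the final object is the space of global sections, which for sheaves on a site with a terminal object $t$ is computed by evaluation, $\operatorname{Map}(\operatorname{Spec}R,F)\simeq F(\operatorname{Spec}R)$ (the $\infty$-categorical Yoneda lemma, together with the fact that the representable presheaf on a terminal object is already a sheaf). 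Applying this with $F=B\mathcal{K}$ and then substituting the equivalence $B\mathcal{K}\simeq\mathcal{K}_{\operatorname{Tate}}$ of Theorem~\ref{geometricdelooping} gives $\operatorname{Map}(\operatorname{Spec}R,B\mathcal{K})\simeq\mathcal{K}_{\operatorname{Tate}}(\operatorname{Spec}R)=\Omega^{\infty}\mathbb{K}((\displaystyle\lim_{\longleftrightarrow}\mathcal{P}(R))^{\natural})$, which is precisely the asserted classification.

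For the ``in particular'' clause I would use the canonical map from the groupoid of objects of an exact category to its non-connective $K$-theory space: a Tate vector bundle $M\in\operatorname{ob}(\displaystyle\lim_{\longleftrightarrow}\mathcal{P}(R))^{\natural}$ determines a point of $\Omega^{\infty}\mathbb{K}((\displaystyle\lim_{\longleftrightarrow}\mathcal{P}(R))^{\natural})=\mathcal{K}_{\operatorname{Tate}}(\operatorname{Spec}R)$, and under the equivalence established above this point corresponds to a $\mathcal{K}$-torsor over $\operatorname{Spec}R$, which I would name $\mathfrak{D}_M$.

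I do not expect a serious obstacle, since the argument is a formal concatenation of results already in place; the only point demanding genuine care is the identification of the mapping space out of the final object with evaluation at $\operatorname{Spec}R$, i.e. verifying that $\operatorname{Spec}R$ really is terminal in the site and that global sections in this $\infty$-topos---and in its hypercompletion, where the equivalence $\theta$ lands---agree with evaluation at the terminal object. The substantive mathematical content, namely the delooping, has already been absorbed into Theorem~\ref{geometricdelooping}, so everything here is bookkeeping in the $\infty$-topos.
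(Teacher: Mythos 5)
Your proposal is correct and follows essentially the same route as the paper: the author likewise combines Theorem~\ref{3.19} with the equivalence $B\mathcal{K}\simeq\mathcal{K}_{\operatorname{Tate}}$ of Theorem~\ref{geometricdelooping} and then invokes Yoneda's lemma to identify maps $\operatorname{Spec}R\to\mathcal{K}_{\operatorname{Tate}}$ with points of $\mathcal{K}_{\operatorname{Tate}}(\operatorname{Spec}R)$, obtaining $\mathfrak{D}_M$ as the torsor classified by the point $[M]$ (realized explicitly as the pullback $\varprojlim(\operatorname{Spec}R\stackrel{[M]}{\rightarrow}B\mathcal{K}\leftarrow\ast)$). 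Your extra care about $\operatorname{Spec}R$ being terminal in the site and about global sections being computed by evaluation is a fuller spelling-out of what the paper compresses into ``by Yoneda's lemma,'' not a different argument.
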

Let $\operatorname{Aut}M$ denote the sheaf of groups on $\operatorname{Spec}R_{\operatorname{Nis}}$ given by $$R^{\prime}\mapsto\operatorname{Aut}_{(\displaystyle\lim_{\longleftrightarrow}\mathcal{P}(R^{\prime}))^{\natural}}M\otimes_RR^{\prime}.$$
This is a group object of the ordinary topos $\operatorname{Shv}_{(\operatorname{Sets})}(\operatorname{Spec}R_{\operatorname{Nis}})$, which is regarded as the full subcategory of discrete objects of the $\infty$-topos $\operatorname{Shv}_{(\operatorname{Spaces})}(N\operatorname{Spec}R_{\operatorname{Nis}})$. 
\begin{thm}
\label{autmaction}
There is a canonical action of the group object $\operatorname{Aut}M$ on the $\mathcal{K}$-torsor $\mathfrak{D}_M$.  
\end{thm}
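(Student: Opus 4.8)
The plan is to realize the pair $(\mathfrak{D}_M,\text{its }\operatorname{Aut}M\text{-symmetry})$ as the fibre of a single $\mathcal{K}$-torsor living over the classifying object $B\operatorname{Aut}M$, and then to read off the action from the Borel/homotopy-quotient description of actions in an $\infty$-topos recalled in Definition \ref{action} (following \cite{nss}): an action of the group object $\operatorname{Aut}M$ on an object $Y$ of $\operatorname{Shv}_{(\operatorname{Spaces})}(N\operatorname{Spec}R_{\operatorname{Nis}})$ is the same datum as an object of the slice $\operatorname{Shv}_{(\operatorname{Spaces})}(N\operatorname{Spec}R_{\operatorname{Nis}})_{/B\operatorname{Aut}M}$ whose fibre over the basepoint $\operatorname{Spec}R\to B\operatorname{Aut}M$ is equivalent to $Y$. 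It therefore suffices to produce such a slice object with fibre $\mathfrak{D}_M$ which is moreover itself a $\mathcal{K}$-torsor; this last property will encode precisely the compatibility of the $\operatorname{Aut}M$-action with the $\mathcal{K}$-torsor structure, i.e.\ that $\operatorname{Aut}M$ acts through $\mathcal{K}$-torsor equivalences.

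First I would construct a morphism of sheaves $\beta_M\colon B\operatorname{Aut}M\to\mathcal{K}_{\operatorname{Tate}}$ from the functoriality of $K$-theory. For every \'{e}tale $R$-algebra $R'$, base change along $R\to R'$ is exact, and the canonical map, natural in the pair (exact category, object), from the classifying space of the automorphism group of an object into the $K$-theory space gives a map of simplicial sets $B\operatorname{Aut}_{(\lim_{\longleftrightarrow}\mathcal{P}(R'))^{\natural}}(M\otimes_RR')\to\Omega^{\infty}\mathbb{K}((\lim_{\longleftrightarrow}\mathcal{P}(R'))^{\natural})$ carrying the basepoint to $[M\otimes_RR']$. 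These maps are natural in $R'$, hence assemble into a map of simplicial presheaves which, under the equivalence $\theta$ and the fibrant replacement implicit in it, represents the desired $\beta_M$. By construction the restriction of $\beta_M$ along the basepoint $\operatorname{Spec}R\to B\operatorname{Aut}M$ is the global point $[M]$ of $\mathcal{K}_{\operatorname{Tate}}$, which by Corollary \ref{dm} together with the classification of Theorem \ref{3.19} is exactly the classifying map $f_M\colon\operatorname{Spec}R\to B\mathcal{K}$ of the torsor $\mathfrak{D}_M$.

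Next I would transport $\beta_M$ across the equivalence $\mathcal{K}_{\operatorname{Tate}}\simeq B\mathcal{K}$ of Theorem \ref{geometricdelooping} (here it is essential, as noted there, that Drinfeld's Nisnevich-local vanishing of $K_{-1}$ makes $\mathcal{K}_{\operatorname{Tate}}$ a connected object, so that it is genuinely the delooping of the group object $\mathcal{K}$ of Proposition \ref{Kasagroup}), and form the pullback of the universal $\mathcal{K}$-torsor $\operatorname{Spec}R\to B\mathcal{K}$ along $\beta_M$. This produces a $\mathcal{K}$-torsor $P\to B\operatorname{Aut}M$, that is, an object of the slice over $B\operatorname{Aut}M$ which is a $\mathcal{K}$-torsor there. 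Pulling $P$ back further along the basepoint $\operatorname{Spec}R\to B\operatorname{Aut}M$ computes, by the pasting law for pullbacks, the $\mathcal{K}$-torsor classified by $\beta_M|_{\operatorname{Spec}R}=f_M$, namely $\mathfrak{D}_M$ itself. Interpreting $P\to B\operatorname{Aut}M$ through the dictionary of Definition \ref{action} thus equips $\mathfrak{D}_M$ with an action of $\operatorname{Aut}M$, and the fact that $P$ is a $\mathcal{K}$-torsor over $B\operatorname{Aut}M$ says exactly that this action is by $\mathcal{K}$-torsor equivalences; canonicity is immediate, since $\beta_M$, the equivalence of Theorem \ref{geometricdelooping}, and the universal torsor are all canonical and natural in $M$.

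The step I expect to be the main obstacle is the second one: promoting the levelwise, classical map ``automorphisms of an object feed into $K$-theory'' to an honest morphism $\beta_M$ of sheaves of spaces with controlled behaviour at the basepoint. One must model $B\operatorname{Aut}M$ compatibly, as the sheafification of $R'\mapsto B\operatorname{Aut}(M\otimes_RR')$ (a $1$-truncated connected object), check that the objectwise maps are natural enough to descend to a morphism in $\operatorname{Shv}_{(\operatorname{Spaces})}(N\operatorname{Spec}R_{\operatorname{Nis}})$ after the fibrant replacement built into $\theta$, and verify that the resulting global point agrees, up to coherent homotopy, with the point of Corollary \ref{dm} defining $\mathfrak{D}_M$. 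Once $\beta_M$ is available, the remaining steps are formal manipulations of pullbacks and of the slice description of actions.
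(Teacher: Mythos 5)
Your proposal is correct and takes essentially the same route as the paper: the paper likewise constructs a map $[[M]]\colon B\operatorname{Aut}M\to B\mathcal{K}$ (obtained levelwise from the natural composite $N\overline{\operatorname{Aut}_{\mathcal{A}}a}\to Ni\mathcal{A}\to\Omega|iS_{\bullet}(\mathcal{A})|$ of Waldhausen's $S_{\bullet}$-construction, assembled into a map of simplicial presheaves and transported through $\theta$ and the equivalence $\mathcal{K}_{\operatorname{Tate}}\simeq B\mathcal{K}$), checks that its restriction along the basepoint is the classifying map $[M]$, and then exhibits $\mathfrak{D}_M$ via pasting of pullbacks as the fibre over the basepoint of the pulled-back object over $B\operatorname{Aut}M$, reading off the action from the fibre-sequence dictionary of Definition \ref{action}. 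The step you flagged as the main obstacle---modelling $B\operatorname{Aut}M$ as the sheafification of the objectwise classifying spaces $R'\mapsto N\overline{\operatorname{Aut}(M\otimes_RR')}$ so that the levelwise maps descend---is precisely the auxiliary lemma the paper proves before assembling the diagram.
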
  

\subsection*{Organization and conventions}
Section 2 provides a brief review of the necessary materials in $\infty$-topos theory, main references being \cite{htt} and \cite{nss}. 
In section 3 we prove our results. 
We work in an $\infty$-categorical setting and refer the reader to \cite{htt} for basic terminology. 
In this article the category of simplicial sets is denoted by $\operatorname{Set}_{\Delta}$. 
We write $(\operatorname{Spaces})$ for the $\infty$-category of spaces (the simplicial nerve of the simplicial category of Kan complexes), and denote by $\operatorname{Preshv}_{(\operatorname{Spaces})}(\mathcal{C})$ and $\operatorname{Shv}_{(\operatorname{Spaces})}(\mathcal{C})$ the $\infty$-categories of presheaves and sheaves of spaces on $\mathcal{C}$, respectively.   

\subsection*{Acknowledgement}
I thank Luigi Previdi for answering my questions on his conjecture. 
In fact, the work presented here grew out from an endeavour to understand the conceptual meaning of Theorem \ref{abstractdelooping}, and his suggestion that the main application of it should be to higher Tate central extensions was truly helpful.  

\section{Recollection on the theory of $\infty$-topoi}
In this section we give a review on the necessary materials of $\infty$-topos theory, collected from \cite{htt} and \cite{nss}. 
Since our exposition given here is somewhat terse, we refer the reader to the references \cite{htt} and \cite{nss} for the full details. 

Let us begin with recalling the definition of an $\infty$-topos. 
\begin{df}[$\infty$-topos; \cite{htt}, 6.1.0.4]
\label{inftytopos}
An {\rm $\infty$-topos} $\mathfrak{X}$ is a full, accessible (see \cite{htt}, 5.4.2.1, for the definition) subcategory of the $\infty$-category $\operatorname{Preshv}_{(\operatorname{Spaces})}(\mathcal{C})=\operatorname{Fun}(\mathcal{C}^{\operatorname{op}},(\operatorname{Spaces}))$ of presheaves of spaces on some $\infty$-category $\mathcal{C}$, such that the inclusion $\mathfrak{X}\hookrightarrow\operatorname{Preshv}_{(\operatorname{Spaces})}(\mathcal{C})$ has a left adjoint which preserves finite limits. 
The left adjoint is called the {\rm sheafification functor}.  
\end{df}
A typical example of an $\infty$-topos is the $\infty$-category $\operatorname{Shv}_{(\operatorname{Spaces})}(\mathcal{C})$ of sheaves of spaces on an $\infty$-category $\mathcal{C}$ equipped with a Grothendieck topology. 
\begin{ex}[$\infty$-topos of sheaves of spaces; \cite{htt}, 6.2.2]
\label{sheavesonsites}
Let $\mathcal{C}$ be an $\infty$-category. 
A {\rm sieve} on an object $C$ of $\mathcal{C}$ is a full subcategory $\mathcal{C}^{(0)}_{/C}$ of the overcategory $\mathcal{C}_{/C}$ such that if a morphism in $\mathcal{C}_{/C}$ has its target in $\mathcal{C}^{(0)}_{/C}$ then it also has its source in $\mathcal{C}^{(0)}_{/C}$.  
It is Proposition 6.2.2.5 of \cite{htt} that there is a canonical bijection between sieves on the object $C$ and monomorphisms in the $\infty$-category $\operatorname{Preshv}_{(\operatorname{Spaces})}(\mathcal{C})$ whose target is $j(C)$, where $j:\mathcal{C}\hookrightarrow\operatorname{Preshv}_{(\operatorname{Spaces})}(\mathcal{C})$ denotes the Yoneda embedding.  

A {\rm Grothendieck topology} on $\mathcal{C}$ is an assignment of a collection of sieves on $C$ to each object $C$ of $\mathcal{C}$. 
A sieve on $C$ belonging to that assigned collection is called a {\rm covering sieve} on $C$. 
A presheaf $F\in\operatorname{Preshv}_{(\operatorname{Spaces})}(\mathcal{C})$ on $\mathcal{C}$ is called a {\rm sheaf of spaces} on $\mathcal{C}$ if for every object $C$ of $\mathcal{C}$ and for every monomorphism $U\hookrightarrow j(C)$ corresponding to a covering sieve on $C$ the induced map $\operatorname{Map}_{\operatorname{Preshv}_{(\operatorname{Spaces})}(\mathcal{C})}(j(C),F)\stackrel{\sim}{\to}\operatorname{Map}_{\operatorname{Preshv}_{(\operatorname{Spaces})}}(U,F)$ is a weak equivalence. 
The full subcategory $\operatorname{Shv}_{(\operatorname{Spaces})}(S)\subset\operatorname{Preshv}_{(\operatorname{Spaces})}(S)$ of sheaves of spaces on $\mathcal{C}$ is an $\infty$-topos (\cite{htt}, 6.2.2.7).  
\end{ex}
An $\infty$-category equipped with a Grothendieck topology is called an \textit{$\infty$-site}. 
An ordinary site can be seen as an $\infty$-site by taking the nerve. 

\begin{df}[Homotopy sheaves; \cite{htt}, 6.5.1] 
\label{homotopysheaves}
Let $\mathfrak{X}\subset\operatorname{Preshv}_{(\operatorname{Spaces})}(\mathcal{C})$ be an $\infty$-topos and $X$ a pointed object. 
For each non-negative integer $n\geq0$, the {\rm $n$-th homotopy sheaf} of $X$ is the sheaf of sets on $\mathcal{C}$ given by sheafifying the presheaf of sets on $\mathcal{C}$ that assigns to each object $C$ of $\mathcal{C}$ the $n$-th homotopy set $\pi_n(X(C))$ of the pointed space $X(C)$. 
\end{df}
We say a pointed object to be \textit{connected} if its $0$-th homotopy sheaf is trivial. 

Write $\Delta_{\operatorname{big}}$ for the category of non-empty finite linearly ordered sets. 
A \textit{simplicial object} in an $\infty$-topos $\mathfrak{X}$ is a functor $N(\Delta_{\operatorname{big}}^{\operatorname{op}})\to\mathfrak{X}$. 
The notions of group objects and their actions in an $\infty$-topos are formulated in terms simplicial objects, as follows.  
\begin{df}[group object; \cite{htt}, 6.1.2.7, 7.2.2.1]
\label{group}
A {\rm group object} of an $\infty$-topos $\mathfrak{X}$ is a simplicial object $G:N(\Delta_{\operatorname{big}}^{\operatorname{op}})\to\mathfrak{X}$ in $\mathfrak{X}$ such that $G([0])$ is a terminal object of $\mathfrak{X}$ and for every $n\geq0$ and for every partition $[n]=S\cup S^{\prime}$ with $S\cap S^{\prime}=\{s\}$, the maps $G([n])\to G(S)$ and $G([n])\to G(S^{\prime})$ exhibit $G([n])$ as a product of $G(S)$ and $G(S^{\prime})$.  
\end{df}
By a slight abuse of language we usually refer to the object $G([1])\in\operatorname{ob}\mathfrak{X}$ as a group object and call the simplicial object $G$ as the \textit{group structure} on $G([1])$. 

\begin{thm}[\cite{htt}, 7.2.2.11-(1)] 
\label{classifyingspace}
If $X$ is a connected pointed object of an $\infty$-topos $\mathfrak{X}$ then its loop space $\Omega X=\varprojlim(\ast\rightarrow X\leftarrow\ast)$ has a natural structure of a group object. 
This assignment $X\mapsto \Omega X$ arranges into an equivalence $$\Omega:\mathfrak{X}_{\ast,\operatorname{conn}}\leftrightarrows\operatorname{Grp}(\mathfrak{X}):B$$ between the $\infty$-categories $\mathfrak{X}_{\ast,\operatorname{conn}}$ of connected pointed objects of $\mathfrak{X}$ and $\operatorname{Grp}(\mathfrak{X})$ of group objects of $\mathfrak{X}$. 
The inverse functor $B$ takes a group object $G$ to the colimit $BG=\varinjlim G$ with the pointing given by $\ast=G([0])\to\varinjlim G$, where $G$ is seen as a diagram in $\mathfrak{X}$ indexed by $N(\Delta_{\operatorname{big}}^{\operatorname{op}})$. 
\end{thm}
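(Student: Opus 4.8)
The plan is to recognize the asserted equivalence as a formal consequence of the Giraud descent axiom governing $\infty$-topoi, namely that every groupoid object is effective. First I would observe that the partition condition in Definition \ref{group} is precisely the groupoid (Segal--Kan) condition defining a groupoid object $N(\Delta_{\operatorname{big}}^{\operatorname{op}})\to\mathfrak{X}$, specialized to the requirement that the object in degree $[0]$ be terminal. This identifies $\operatorname{Grp}(\mathfrak{X})$ with the full subcategory of groupoid objects $U_\bullet$ whose degree-zero term $U_0$ is a terminal object $\ast$. Under this identification the functor $B=\varinjlim$ is just the geometric realization of the underlying simplicial diagram.

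Next I would set up the two functors and pin down the simplicial object computing $\Omega$. For a pointed object $X$ with basepoint $\ast\to X$, the \v{C}ech nerve $\check{C}(\ast\to X)_\bullet$ is a groupoid object whose degree-$n$ term is the $(n+1)$-fold fiber power of $\ast$ over $X$; its degree-zero term is $\ast$ and its degree-one term is $\ast\times_X\ast=\Omega X$. Thus $\check{C}(\ast\to X)_\bullet$ is a group object whose underlying object is the loop space $\Omega X$, and this is exactly the natural group structure asserted in the first sentence. Dually, for a group object $G$ I set $BG=\varinjlim G$, pointed by $\ast=G([0])\to\varinjlim G$.

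The two natural equivalences then both flow from effectivity. For $B\circ\Omega\simeq\operatorname{id}$: when $X$ is connected the basepoint $\ast\to X$ is an effective epimorphism --- indeed a pointed object has trivial $0$-th homotopy sheaf (Definition \ref{homotopysheaves}) exactly when $\ast\to X$ is surjective on $\pi_0$-sheaves --- so by descent $X$ is the realization of its own \v{C}ech nerve, giving $X\simeq|\check{C}(\ast\to X)_\bullet|=B\Omega X$. For $\Omega\circ B\simeq\operatorname{id}$: a group object $G$ is in particular a groupoid object, so by effectivity $G$ is equivalent to the \v{C}ech nerve of $G([0])=\ast\to|G|=BG$; reading off degree one yields $\Omega BG=\ast\times_{BG}\ast=G([1])=G$. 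I would also note that $BG$ lands in the connected pointed objects: the canonical map from the degree-zero term of a groupoid object to its realization is an effective epimorphism, so $\ast\to BG$ is an effective epimorphism and hence $BG$ is connected.

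Finally I would assemble these constructions into functors $\Omega$ and $B$ between $\mathfrak{X}_{\ast,\operatorname{conn}}$ and $\operatorname{Grp}(\mathfrak{X})$ and verify that the equivalences above are natural, yielding the asserted equivalence of $\infty$-categories. The main obstacle is the effectivity of groupoid objects itself: this is the substantive descent (Giraud) axiom that distinguishes $\infty$-topoi among presentable $\infty$-categories, and the entire argument rests on it. A secondary subtlety is to match the \emph{whole} simplicial structure of $\Omega X$ --- the full group structure, not merely the underlying object $\Omega X=\ast\times_X\ast$ in degree one --- with that of the \v{C}ech nerve, and to confirm the connectedness $\Leftrightarrow$ effective-epimorphism dictionary in both directions.
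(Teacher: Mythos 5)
The paper does not actually prove this theorem: it is imported verbatim from Lurie (\cite{htt}, Theorem 7.2.2.11-(1)), with the torsor-theoretic reading supplied by \cite{nss}, so there is no internal argument to measure your attempt against. That said, your sketch correctly reproduces the standard proof from those references. You rightly identify group objects as groupoid objects whose degree-zero term is terminal, so that $B=\varinjlim$ is geometric realization; the \v{C}ech nerve of the basepoint $\ast\to X$ supplies the group structure on $\Omega X=\ast\times_X\ast$; the Giraud axiom that every groupoid object is effective yields both directions, namely $\Omega BG\simeq G([1])$ by reading off degree one of the \v{C}ech nerve of $\ast\to BG$, and $B\Omega X\simeq X$ using that connectedness of a pointed object is equivalent to $\ast\to X$ being an effective epimorphism (surjectivity on $\pi_0$-sheaves); and $BG$ lands in connected pointed objects because $G([0])=\ast\to\varinjlim G$ is an effective epimorphism. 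The one step you leave genuinely implicit --- and the place where Lurie's proof does its technical work --- is the final assembly: upgrading the objectwise constructions and the two objectwise equivalences to an adjoint equivalence of $\infty$-categories, i.e.\ exhibiting $\Omega$ and $B$ as functors (via the \v{C}ech-nerve/colimit adjunction restricted to groupoid objects with terminal degree-zero term) whose unit and counit are exactly the equivalences you describe. You flag this yourself, and given effectivity it is bookkeeping rather than a missing idea, so I see no substantive gap.
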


\begin{df}[Action of a group object; \cite{nss}, Definition 3.1]
\label{action}
Let $G$ be a group object of an $\infty$-topos $\mathfrak{X}$. 
An {\rm action} of $G$ on an object $P\in\operatorname{ob}\mathfrak{X}$ is a map of simplicial objects $\rho\to G$ in $\mathfrak{X}$ such that $\rho([0])=P$ and for every $n\geq0$ and for every partition $[n]=S\cup S^{\prime}$ with $S\cap S^{\prime}=\{s\}$, the maps $\rho([n])\to\rho(S)$ and $\rho([n])\to G(S^{\prime})$ exhibit $\rho([n])$ as a product of $\rho(S)$ and $G(S^{\prime})$.  
\end{df}
Given an action $\rho\to G$ of $G$ on $P$, we get a square 
\begin{displaymath}
\begin{CD}
P@>>>\ast\\
@VVV@VVV\\
\varinjlim\rho@>>>BG 
\end{CD}
\end{displaymath}
by taking the colimits of the simplicial objects $\rho$ and $G$ seen as diagrams in $\mathfrak{X}$ indexed by $N(\Delta_{\operatorname{big}}^{\operatorname{op}})$. 
It can be shown that this square is a pullback square (\cite{nss}, Proposition 3.15). 
Conversely, given a pullback square 
\begin{displaymath}
\begin{CD}
P@>>>\ast\\
@VVV@VVV\\
X@>>>BG 
\end{CD}
\end{displaymath}
we can form a map of simplicial objects $\check{C}(P\to X)\to\check{C}(\ast\to BG)=G$ by taking the Cech nerves $\check{C}$ (see \cite{htt}, 6.1.2) of $P\to X$ and $\ast\to BG$. 
The constructions given above are mutually inverses to each other, due to the Giraud axiom saying that in an $\infty$-topos every groupoid object is effective; see \cite{nss}, section 3, for a details. 
Therefore, in an $\infty$-topos, giving an action of a group object $G$ on an object $P$ is equivalent to giving a fiber sequence $P\to X\to BG$, i.e. to describing $P$ as a pullback $P=\varprojlim(X\rightarrow BG\leftarrow\ast)$.   
\begin{df}[Torsor; \cite{nss}, Definition 3.4] 
\label{torsor}
Let $G$ be a group object in an $\infty$-topos $\mathfrak{X}$ and $X$ an object.  
A {\rm $G$-torsor over $X$} is a $G$-action $\rho\to G$ together with a map $\rho([0])\to X$ such that the induced map to $X$ from the colimit $\varinjlim\rho$, taken over the simplicial index category $N(\Delta^{\operatorname{op}}_{\operatorname{big}})$, is an equivalence.  
\end{df}
It is notable that this simple definition automatically implies, in the setting of an $\infty$-topos, the usual conditions for torsors, such as the principality condition and the local triviality. 
See \cite{nss}, Propositions 3.7 and 3.13. 

Given a $G$-torsor $\rho\to G$ over $X$, we get, by taking the colimits, a pullback square 
\begin{displaymath}
\begin{CD}
P@>>>\ast\\
@VVV@VVV\\
X@>>>BG,  
\end{CD}
\end{displaymath}
where $P=\rho([0])$, and in particular a map $X\to BG$. 
The above discussion on group actions shows that one can conversely construct a $G$-torsor $\check{C}(X\times_{BG}\ast\to X)\to G$ out of a given map $X\to BG$, and these constructions are mutually inverses. 
Hence, 
\begin{thm}[\cite{nss}, Theorem 3.19] 
\label{3.19}
Let $\mathfrak{X}$ be an $\infty$-topos and $G$ a group object. 
The $\infty$-category (which can be shown to be an $\infty$-groupoid; \cite{nss}, Proposition 3.18) of $G$-torsors over a fixed object $X$ is equivalent to the $\infty$-groupoid $\operatorname{Map}_{\mathfrak{X}}(X,BG)$ of maps from $X$ to $BG$. 
\end{thm}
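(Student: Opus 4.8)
The plan is to establish the equivalence of Theorem \ref{3.19} by exhibiting two mutually inverse passages between $G$-torsors over $X$ and points of the mapping space $\operatorname{Map}_{\mathfrak{X}}(X,BG)$, leaning throughout on the dictionary recorded just above between $G$-actions and fiber sequences $P\to Y\to BG$, and on the Giraud axiom that every groupoid object in an $\infty$-topos is effective.

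First I would treat the passage from a torsor to a map. Given a $G$-torsor $\rho\to G$ over $X$, Definition \ref{torsor} supplies an equivalence $\varinjlim\rho\stackrel{\sim}{\to}X$, while Proposition 3.15 of \cite{nss} exhibits the colimit square
\begin{displaymath}
\begin{CD}
P@>>>\ast\\
@VVV@VVV\\
\varinjlim\rho@>>>BG
\end{CD}
\end{displaymath}
as a pullback, with $P=\rho([0])$. Transporting along $\varinjlim\rho\simeq X$ turns this into a pullback square with $X$ at the lower left, whence a canonical point $X\to BG$ of $\operatorname{Map}_{\mathfrak{X}}(X,BG)$. The structural fact underpinning the right-hand column is the identification $\check{C}(\ast\to BG)=G$ of group objects, which is exactly the content of the delooping equivalence of Theorem \ref{classifyingspace}.

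Conversely, from a map $f\colon X\to BG$ I would form the pullback $P=X\times_{BG}\ast$ and take the \v{C}ech nerve $\check{C}(P\to X)$. Since $BG$ is connected, the map $\ast\to BG$ is an effective epimorphism; as effective epimorphisms are stable under base change in an $\infty$-topos, $P\to X$ is an effective epimorphism, and hence $\varinjlim\check{C}(P\to X)\simeq X$ by effectivity of groupoid objects. The same pullback square induces a map of \v{C}ech nerves $\check{C}(P\to X)\to\check{C}(\ast\to BG)=G$ which, by the compatibility encoded in Definition \ref{action}, is a $G$-action; paired with $P\to X$ this yields a $G$-torsor over $X$.

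The remaining and principal point is to check that these passages are mutually inverse and --- decisively in the $\infty$-categorical setting --- that they organize into functors of $\infty$-groupoids inducing a genuine equivalence, not merely a bijection on equivalence classes. Starting from $\rho\to G$ and re-forming the \v{C}ech nerve recovers $\rho$, because the groupoid object underlying the action is effective, so $\rho\simeq\check{C}(\rho([0])\to\varinjlim\rho)$; starting from $f$ and taking colimits returns the original pullback square, hence $f$ itself. I expect the effectivity of groupoid objects (the Giraud axiom) to be the load-bearing ingredient: it is what makes the \v{C}ech-nerve and colimit operations inverse equivalences rather than inverse only up to isomorphism classes, and so simultaneously forces essential surjectivity and full faithfulness of the correspondence. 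The most delicate bookkeeping will be assembling everything into an equivalence of $\infty$-groupoids that is natural in $X$, and it is there that I would rely most heavily on the machinery of \cite{nss}, section 3.
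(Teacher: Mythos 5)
Your proposal is correct and follows essentially the same route as the paper's own treatment (which itself recapitulates \cite{nss}, section 3): the forward passage via the colimit pullback square of Proposition 3.15 of \cite{nss}, the converse via the \v{C}ech nerve $\check{C}(X\times_{BG}\ast\to X)\to\check{C}(\ast\to BG)=G$, and mutual inverseness secured by the Giraud axiom that groupoid objects in an $\infty$-topos are effective. Your added observation that $P\to X$ is an effective epimorphism (by base change from $\ast\to BG$, using connectedness of $BG$) is a correct piece of the argument that the paper leaves implicit in its citation of \cite{nss}.
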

In this sense we call $BG$ the \textit{classifying space object} of the group object $G$, and say that a map $X\to BG$ \textit{classifies} the $G$-torsor $\check{C}(X\times_{BG}\ast\to X)\to G$ over $X$. 
This and Theorem \ref{classifyingspace} exhibit a feature of $\infty$-topos theory, which is particularly convenient for our purposes, that in an $\infty$-topos the classifying space for torsors is just given by the connected delooping of the group. 

\section{Proofs}
Let $R$ be a commutative noetherian ring of finite Krull dimension, and consider the objects $\mathcal{K}$ and $\mathcal{K}_{\operatorname{Tate}}$ (Definitions \ref{k} and \ref{ktate}) of the $\infty$-topos $\operatorname{Shv}_{(\operatorname{Spaces})}(N\operatorname{Spec}R_{\operatorname{Nis}})$ of sheaves of spaces on the small Nisnevich site of $\operatorname{Spec}R$. 
\begin{lemma}
\label{pointedconnected}
The object $\mathcal{K}_{\operatorname{Tate}}$ is a connected pointed object of the $\infty$-topos $\operatorname{Shv}_{(\operatorname{Spaces})}(N\operatorname{Spec}R_{\operatorname{Nis}})$. 
\end{lemma}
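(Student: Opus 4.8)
The plan is to check the two conditions packaged into the phrase ``connected pointed object'' one at a time: that $\mathcal{K}_{\operatorname{Tate}}$ carries a canonical map from the terminal object, and that its $0$-th homotopy sheaf, in the sense of Definition \ref{homotopysheaves}, is trivial.

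First I would dispose of the pointing, which is essentially formal. For every exact category $\mathcal{E}$ the space $\Omega^{\infty}\mathbb{K}(\mathcal{E})$ is the underlying infinite loop space of a spectrum, hence comes equipped with a canonical basepoint; this basepoint is natural in $\mathcal{E}$, so the simplicial presheaf $R^{\prime}\mapsto\Omega^{\infty}\mathbb{K}((\displaystyle\lim_{\longleftrightarrow}\mathcal{P}(R^{\prime}))^{\natural})$ factors through pointed spaces. Through the equivalence $\theta$ this produces a map $\ast\to\mathcal{K}_{\operatorname{Tate}}$ in the $\infty$-topos, which is the required pointing.

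The content lies in connectedness. By Definition \ref{homotopysheaves} the $0$-th homotopy sheaf of $\mathcal{K}_{\operatorname{Tate}}$ is the Nisnevich sheafification of the presheaf of sets $R^{\prime}\mapsto\pi_0(\mathcal{K}_{\operatorname{Tate}}(R^{\prime}))$. Because $\mathcal{K}_{\operatorname{Tate}}$ is, by construction, the image under $\theta$ of a presheaf that is already fibrant in Jardine's local model structure, its sections are computed without further correction by $\mathcal{K}_{\operatorname{Tate}}(R^{\prime})\simeq\Omega^{\infty}\mathbb{K}((\displaystyle\lim_{\longleftrightarrow}\mathcal{P}(R^{\prime}))^{\natural})$, so the presheaf to be sheafified is $R^{\prime}\mapsto\pi_0\mathbb{K}((\displaystyle\lim_{\longleftrightarrow}\mathcal{P}(R^{\prime}))^{\natural})$. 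I would then identify this group. Theorem \ref{abstractdelooping} gives a natural equivalence $\mathbb{K}((\displaystyle\lim_{\longleftrightarrow}\mathcal{P}(R^{\prime}))^{\natural})\simeq\Sigma\mathbb{K}(\mathcal{P}(R^{\prime}))$, and passing to $\pi_0$ shifts the homotopy group, yielding a natural isomorphism $\pi_0\mathbb{K}((\displaystyle\lim_{\longleftrightarrow}\mathcal{P}(R^{\prime}))^{\natural})\cong\pi_{-1}\mathbb{K}(\mathcal{P}(R^{\prime}))=K_{-1}(R^{\prime})$, the first negative $K$-group of $R^{\prime}$.

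Finally I would invoke Drinfeld's theorem (\cite{drinfeld}, Theorem 3.4) that $K_{-1}$ vanishes Nisnevich-locally, which is exactly the statement that the Nisnevich sheafification of $R^{\prime}\mapsto K_{-1}(R^{\prime})$ is the trivial sheaf. Combined with the identification above, this shows the $0$-th homotopy sheaf of $\mathcal{K}_{\operatorname{Tate}}$ is trivial, so $\mathcal{K}_{\operatorname{Tate}}$ is connected. The hard part is not any single step but the one genuinely nontrivial input, Drinfeld's local vanishing result; the remaining care goes into confirming that the section-wise homotopy-group presheaf really is what Definition \ref{homotopysheaves} sheafifies, which rests on the fibrancy of the defining presheaf, and into matching the spectrum suspension $\Sigma$ appearing after $\pi_0$ with the delooping direction of Theorem \ref{abstractdelooping}.
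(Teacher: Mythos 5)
Your proposal is correct and follows essentially the same route as the paper: the pointing is formal, and connectedness reduces via Theorem \ref{abstractdelooping} to the natural identification $\pi_0\bigl(\Omega^{\infty}\mathbb{K}((\lim_{\longleftrightarrow}\mathcal{P}(R^{\prime}))^{\natural})\bigr)\cong K_{-1}(R^{\prime})$, whose Nisnevich-local vanishing is exactly Drinfeld's theorem. The only cosmetic difference is that the paper exhibits the basepoint as the class $[0]$ of the chosen zero object of the exact category rather than as the canonical basepoint of the infinite loop space, which amounts to the same thing.
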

\begin{proof}
The pointedness is trivial, with the pointing $\operatorname{Spec}R\to\mathcal{K}_{\operatorname{Tate}}$ classified by the point $[0]\in\mathcal{K}_{\operatorname{Tate}}(\operatorname{Spec}R)=\Omega^{\infty}\mathbb{K}((\displaystyle\lim_{\longleftrightarrow}\mathcal{P}(R))^{\natural})$ given by the chosen $0$-object of the exact category $(\displaystyle\lim_{\longleftrightarrow}\mathcal{P}(-))^{\natural}$. 
The nontrivial part is the connectedness, which amounts to showing that the $0$-th homotopy sheaf $\pi_0\mathcal{K}_{\operatorname{Tate}}$ is a terminal object.  
The sheaf of sets $\pi_0\mathcal{K}_{\operatorname{Tate}}$ is by Definition \ref{homotopysheaves} the sheafification of the presheaf $R^{\prime}\mapsto\pi_0(\mathcal{K}_{\operatorname{Tate}}(R^{\prime}))=\pi_0(\Omega^{\infty}\mathbb{K}((\displaystyle\lim_{\longleftrightarrow}\mathcal{P}(R^{\prime}))^{\natural}))$, which is naturally isomorphic to the presheaf $R^{\prime}\mapsto\pi_0(\Omega^{\infty}\Sigma\mathbb{K}(\mathcal{P}(R^{\prime})))=K_{-1}(R^{\prime})$ by Theorem \ref{abstractdelooping}. 
Now, it is a theorem of Drinfeld (\cite{drinfeld}, Theorem 3.4) that the presheaf $K_{-1}$ vanishes Nisnevich locally. 
Therefore its sheafification vanishes and we get the desired triviality of the $0$-th homotopy sheaf $\pi_0\mathcal{K}_{\operatorname{Tate}}$. 
\end{proof}
\begin{lemma}
\label{loopspace}
The loop space $\Omega\mathcal{K}_{\operatorname{Tate}}$ of the pointed object $\mathcal{K}_{\operatorname{Tate}}$ is equivalent to $\mathcal{K}$. 
\end{lemma}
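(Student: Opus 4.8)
The plan is to compute $\Omega\mathcal{K}_{\operatorname{Tate}}$ objectwise on the small Nisnevich site and then identify the result with $\mathcal{K}$ using the delooping equivalence of Theorem \ref{abstractdelooping}. First I would record that the loop space $\Omega\mathcal{K}_{\operatorname{Tate}}=\varprojlim(\ast\to\mathcal{K}_{\operatorname{Tate}}\leftarrow\ast)$ is a finite limit in the $\infty$-topos $\operatorname{Shv}_{(\operatorname{Spaces})}(N\operatorname{Spec}R_{\operatorname{Nis}})$, taken at the distinguished basepoint $[0]$ supplied by Lemma \ref{pointedconnected}. Since the inclusion $\operatorname{Shv}_{(\operatorname{Spaces})}(N\operatorname{Spec}R_{\operatorname{Nis}})\hookrightarrow\operatorname{Preshv}_{(\operatorname{Spaces})}(N\operatorname{Spec}R_{\operatorname{Nis}})$ is right adjoint to the sheafification functor (Definition \ref{inftytopos}), it preserves all limits; and limits of presheaves of spaces are computed objectwise. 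Because $\mathcal{K}_{\operatorname{Tate}}$ is modeled by a fibrant simplicial presheaf whose value at each \'etale $R$-algebra $R^{\prime}$ is $\Omega^{\infty}\mathbb{K}((\displaystyle\lim_{\longleftrightarrow}\mathcal{P}(R^{\prime}))^{\natural})$ (Definition \ref{ktate}), this yields for every $R^{\prime}$ a natural equivalence $(\Omega\mathcal{K}_{\operatorname{Tate}})(R^{\prime})\simeq\Omega\,\Omega^{\infty}\mathbb{K}((\displaystyle\lim_{\longleftrightarrow}\mathcal{P}(R^{\prime}))^{\natural})$.

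Next I would feed in the delooping theorem. Applying Theorem \ref{abstractdelooping} to $\mathcal{A}=\mathcal{P}(R^{\prime})$ gives a natural equivalence of spectra $\mathbb{K}(R^{\prime})=\mathbb{K}(\mathcal{P}(R^{\prime}))\simeq\Omega\mathbb{K}((\displaystyle\lim_{\longleftrightarrow}\mathcal{P}(R^{\prime}))^{\natural})$, equivalently $\mathbb{K}((\displaystyle\lim_{\longleftrightarrow}\mathcal{P}(R^{\prime}))^{\natural})\simeq\Sigma\mathbb{K}(R^{\prime})$. As $\Omega^{\infty}$ is a right adjoint it commutes with the loop functor on spectra, so $\Omega\,\Omega^{\infty}\mathbb{K}((\displaystyle\lim_{\longleftrightarrow}\mathcal{P}(R^{\prime}))^{\natural})\simeq\Omega^{\infty}\Omega\mathbb{K}((\displaystyle\lim_{\longleftrightarrow}\mathcal{P}(R^{\prime}))^{\natural})\simeq\Omega^{\infty}\Omega\Sigma\mathbb{K}(R^{\prime})$. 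Finally, since $\Omega$ and $\Sigma$ are mutually inverse equivalences on the stable $\infty$-category of spectra, $\Omega\Sigma\mathbb{K}(R^{\prime})\simeq\mathbb{K}(R^{\prime})$, and therefore $(\Omega\mathcal{K}_{\operatorname{Tate}})(R^{\prime})\simeq\Omega^{\infty}\mathbb{K}(R^{\prime})=\mathcal{K}(R^{\prime})$.

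All of the equivalences above are natural in $R^{\prime}$ and respect the chosen basepoints (each being the image of the zero object, resp. the zero spectrum), so they assemble into an equivalence of objects $\Omega\mathcal{K}_{\operatorname{Tate}}\simeq\mathcal{K}$ in the $\infty$-topos.

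The hard part will not be the formal manipulations but ensuring that the delooping equivalence of Theorem \ref{abstractdelooping} is genuinely \emph{natural} in the variable $R^{\prime}$: without that naturality the pointwise equivalences would fail to glue into a map of sheaves. Fortunately Theorem \ref{abstractdelooping} is stated as a natural equivalence of spectra, so this is available. The only remaining care is bookkeeping, namely verifying that the loop space in the topos is correctly computed objectwise (the limit-preservation of the inclusion together with objectwise limits in presheaves, combined with the fact that the representing simplicial presheaf is already fibrant and hence its sections agree with its values) and that every identification is basepoint-compatible.
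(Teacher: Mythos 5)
Your proof is correct and takes essentially the same route as the paper's: both reduce the computation of $\Omega\mathcal{K}_{\operatorname{Tate}}$ to an objectwise one (you via limit-preservation of the right-adjoint inclusion of sheaves into presheaves and objectwise limits of presheaves; the paper via homotopy limits in the simplicial model category together with Theorem 4.2.4.1 of \cite{htt}) and then conclude by the natural delooping equivalence of Theorem \ref{abstractdelooping} applied to $\mathcal{A}=\mathcal{P}(R^{\prime})$. If anything, you make explicit a step the paper leaves tacit, namely why the loop space taken in $\operatorname{Shv}_{(\operatorname{Spaces})}(N\operatorname{Spec}R_{\operatorname{Nis}})$ agrees with the one computed in presheaves.
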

\begin{proof}
Recall that the objects $\mathcal{K}$ and $\mathcal{K}_{\operatorname{Tate}}$ of $\operatorname{Shv}_{(\operatorname{Spaces})}(N\operatorname{Spec}R_{\operatorname{Nis}})$ are the images by $\theta$ of the simplicial presheaves $\Omega^{\infty}\mathbb{K}(-)$ and $\Omega^{\infty}\mathbb{K}((\displaystyle\lim_{\longleftrightarrow}\mathcal{P}(-))^{\natural})$ (Definitions \ref{k} and \ref{ktate}). 
In the simplicial category $(\operatorname{Set}_{\Delta}^{\operatorname{Spec}R_{\operatorname{Nis}}^{\operatorname{op}}})^{\circ}$ we have that the object $\Omega^{\infty}\mathbb{K}(-)$ is equivalent to the homotopy limit $\displaystyle{\rm \mathop{holim}_{\longleftarrow}}(\ast\rightarrow\Omega^{\infty}\mathbb{K}((\displaystyle\lim_{\longleftrightarrow}\mathcal{P}(-))^{\natural})\leftarrow\ast)$ by Theorem \ref{abstractdelooping}. 
By Theorem 4.2.4.1 of \cite{htt} this translates into an equivalence in $N(\operatorname{Set}_{\Delta}^{\operatorname{Spec}R_{\operatorname{Nis}}^{\operatorname{op}}})^{\circ}=\operatorname{Preshv}_{(\operatorname{Spaces})}(N\operatorname{Spec}R_{\operatorname{Nis}})$ between the object $\mathcal{K}=\theta(\Omega^{\infty}\mathbb{K}(-))$ and the limit $\varprojlim(\ast\rightarrow\theta(\Omega^{\infty}\mathbb{K}((\displaystyle\lim_{\longleftrightarrow}\mathcal{P}(-))^{\natural})\leftarrow\ast)$, which is by definition the loop space of the pointed object $\theta(\Omega^{\infty}\mathbb{K}((\displaystyle\lim_{\longleftrightarrow}\mathcal{P}(-))^{\natural})=\mathcal{K}_{\operatorname{Tate}}$. 
\end{proof}
{\bf Proof of Proposition \ref{Kasagroup}, Theorem \ref{geometricdelooping}, and Corollary \ref{dm}. }  
Recall Theorem \ref{classifyingspace} saying that for an $\infty$-topos $\mathfrak{X}$ there is an equivalence $$\Omega:\mathfrak{X}_{\ast,\operatorname{conn}}\leftrightarrows\operatorname{Grp}(\mathfrak{X}):B$$ between the $\infty$-categories $\mathfrak{X}_{\ast,\operatorname{conn}}$ of connected pointed objects of $\mathfrak{X}$ and $\operatorname{Grp}(\mathfrak{X})$ of group objects of $\mathfrak{X}$. 
For $\mathfrak{X}=\operatorname{Shv}_{(\operatorname{Spaces})}(N\operatorname{Spec}R_{\operatorname{Nis}})$ we have by Lemma \ref{pointedconnected} that $\mathcal{K}_{\operatorname{Tate}}$ is in $\mathfrak{X}_{\ast,\operatorname{conn}}$, and by Lemma \ref{loopspace} that its loop space $\Omega\mathcal{K}_{\operatorname{Tate}}$ is equivalent to $\mathcal{K}$. 
This provides with $\mathcal{K}$ the desired group structure, and the proof of Proposition \ref{Kasagroup} is complete. 

Applying the inverse functor $B$ to the equivalence between $\Omega\mathcal{K}_{\operatorname{Tate}}$ and $\mathcal{K}$ we obtain the desired equivalence between $\mathcal{K}_{\operatorname{Tate}}\cong B\Omega\mathcal{K}_{\operatorname{Tate}}$ and $B\mathcal{K}$, where $B\mathcal{K}$ serves as the classifying space object for $\mathcal{K}$-torsors in view of Theorem \ref{3.19}, and the proof of Theorem \ref{geometricdelooping} is complete. 

Thus we see that $\mathcal{K}$-torsors over $\operatorname{Spec}R$ are classified by maps from $\operatorname{Spec}R$ to $B\mathcal{K}\cong\mathcal{K}_{\operatorname{Tate}}$, which correspond by Yoneda's lemma to points of $\mathcal{K}_{\operatorname{Tate}}(\operatorname{Spec}R)=\Omega^{\infty}\mathbb{K}((\displaystyle\lim_{\longleftrightarrow}\mathcal{P}(R))^{\natural})$, thereby proving the first assertion of Corollary \ref{dm}. 
To prove the second assertion, let $M$ be a Tate vector bundle over $\operatorname{Spec}R$. 
Then as an object of the exact category $(\displaystyle\lim_{\longleftrightarrow}\mathcal{P}(R))^{\natural}$ it defines a point $[M]$ of the $K$-theory space $\Omega^{\infty}\mathbb{K}((\displaystyle\lim_{\longleftrightarrow}\mathcal{P}(R))^{\natural})=\mathcal{K}_{\operatorname{Tate}}(\operatorname{Spec}R)$, whose corresponding map $\operatorname{Spec}R\to\mathcal{K}_{\operatorname{Tate}}\cong B\mathcal{K}$ we also denote by $[M]$ by a slight abuse of notation.   
The desired torsor $\mathfrak{D}_M$ is the $\mathcal{K}$-torsor classified by this map $[M]$. 
Namely, it is the pullback $\mathfrak{D}_M=\varprojlim(\operatorname{Spec}R\stackrel{[M]}{\rightarrow}B\mathcal{K}\leftarrow\ast)$: 
\begin{displaymath}
\begin{CD}
\mathfrak{D}_M@>>>\operatorname{Spec}R\\
@VVV@VV{\text{base-point}}V\\
\operatorname{Spec}R@>{[M]}>>B\mathcal{K}. 
\end{CD}
\end{displaymath}
{\bf Proof of Theorem \ref{autmaction}. }
Let $M\in\operatorname{ob}(\displaystyle\lim_{\longleftrightarrow}\mathcal{P}(R))^{\natural}$ be a Tate vector bundle over $\operatorname{Spec}R$. 
We consider the simplicial presheaf on $\operatorname{Spec}R_{\operatorname{Nis}}$ that assigns to $R^{\prime}$ the simplicial set $N\overline{\operatorname{Aut}_{(\displaystyle\lim_{\longleftrightarrow}\mathcal{P}(R^{\prime}))^{\natural}}M\otimes_RR^{\prime}}$, where $\overline{\operatorname{Aut}_{(\displaystyle\lim_{\longleftrightarrow}\mathcal{P}(R^{\prime}))^{\natural}}M\otimes_RR^{\prime}}$ is the groupoid with a single object and morphisms on the unique object given by the elements of the group $\operatorname{Aut}_{(\displaystyle\lim_{\longleftrightarrow}\mathcal{P}(R^{\prime}))^{\natural}}M\otimes_RR^{\prime}$, and composition defined by the multiplication of the group. 
By taking the $\theta$ of the fibrant replacement of it we get a presheaf of spaces on $\operatorname{Spec}R^{\operatorname{op}}_{\operatorname{Nis}}$, which is denoted by $N\overline{\operatorname{Aut}M}$, and whose sheafification is denoted by $a(N\overline{\operatorname{Aut}M})$. 
We use the following lemma. 
\begin{lemma}
The classifying space object for the group object $\operatorname{Aut}M$ is given by $a(N\overline{\operatorname{Aut}M})$.   
\end{lemma}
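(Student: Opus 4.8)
The plan is to prove this in exact parallel with the proofs of Lemmas \ref{pointedconnected} and \ref{loopspace}, namely by invoking the equivalence $\Omega:\mathfrak{X}_{\ast,\operatorname{conn}}\leftrightarrows\operatorname{Grp}(\mathfrak{X}):B$ of Theorem \ref{classifyingspace}. Concretely, I will show that $a(N\overline{\operatorname{Aut}M})$ is a connected pointed object of the $\infty$-topos $\operatorname{Shv}_{(\operatorname{Spaces})}(N\operatorname{Spec}R_{\operatorname{Nis}})$ whose loop space recovers the group object $\operatorname{Aut}M$, and then apply the inverse functor $B$ to conclude $B(\operatorname{Aut}M)\simeq B\Omega a(N\overline{\operatorname{Aut}M})\simeq a(N\overline{\operatorname{Aut}M})$.

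First I would treat pointedness and connectedness. The groupoid $\overline{\operatorname{Aut}_{(\displaystyle\lim_{\longleftrightarrow}\mathcal{P}(R^{\prime}))^{\natural}}M\otimes_RR^{\prime}}$ has a single, distinguished object, which provides a canonical basepoint of its nerve and hence, after applying $\theta$ to the fibrant replacement and sheafifying, a point $\operatorname{Spec}R\to a(N\overline{\operatorname{Aut}M})$. For connectedness I compute the $0$-th homotopy sheaf via Definition \ref{homotopysheaves}: it is the sheafification of the presheaf $R^{\prime}\mapsto\pi_0(N\overline{\operatorname{Aut}_{(\displaystyle\lim_{\longleftrightarrow}\mathcal{P}(R^{\prime}))^{\natural}}M\otimes_RR^{\prime}})$. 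Since the nerve of a groupoid with a single object is a connected simplicial set, this presheaf is already the terminal presheaf, so its sheafification is terminal and $a(N\overline{\operatorname{Aut}M})$ lies in $\mathfrak{X}_{\ast,\operatorname{conn}}$.

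Next I would identify the loop space. Because limits in a presheaf $\infty$-category are computed objectwise, the presheaf-level loop space $\Omega N\overline{\operatorname{Aut}M}$ is, section by section, the loop space of $N\overline{\operatorname{Aut}_{(\displaystyle\lim_{\longleftrightarrow}\mathcal{P}(R^{\prime}))^{\natural}}M\otimes_RR^{\prime}}$; as the nerve of a one-object groupoid this space is an Eilenberg--MacLane space $K(\operatorname{Aut}M\otimes_RR^{\prime},1)$, whose loop space is the discrete space $\operatorname{Aut}M\otimes_RR^{\prime}$. Hence $\Omega N\overline{\operatorname{Aut}M}$ is the discrete presheaf $R^{\prime}\mapsto\operatorname{Aut}_{(\displaystyle\lim_{\longleftrightarrow}\mathcal{P}(R^{\prime}))^{\natural}}M\otimes_RR^{\prime}$, whose sheafification is exactly $\operatorname{Aut}M$. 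The crucial point is that the sheafification functor $a$ is left exact by Definition \ref{inftytopos}, so it commutes with the finite limit $\ast\times_{(-)}\ast$ defining $\Omega$, and it carries the terminal presheaf and the chosen basepoint to their sheaf counterparts; therefore $\Omega a(N\overline{\operatorname{Aut}M})\simeq a(\Omega N\overline{\operatorname{Aut}M})\simeq\operatorname{Aut}M$ in the $\infty$-topos.

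With these two facts in hand, applying $B$ to the equivalence $\Omega a(N\overline{\operatorname{Aut}M})\simeq\operatorname{Aut}M$ and using $B\Omega\simeq\operatorname{id}$ on connected pointed objects yields $a(N\overline{\operatorname{Aut}M})\simeq B(\operatorname{Aut}M)$, as asserted. The main obstacle I anticipate is the interchange of $\Omega$ and the sheafification $a$ in the loop-space step: this is not automatic and must be justified by the left-exactness of $a$ built into the definition of an $\infty$-topos, together with the care that the presheaf-level identification $\Omega N\overline{G}\simeq G$ is made at the level of fibrant objects, so that $N\overline{G}$ genuinely represents $K(G,1)$ in $(\operatorname{Spaces})$ before one passes to sheaves.
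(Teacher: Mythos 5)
Your proposal is correct and takes essentially the same route as the paper's own proof: pointedness from the unique object of the groupoid, connectedness from the connectivity of $N\overline{G}=K(G,1)$, and the identification $\Omega a(N\overline{\operatorname{Aut}M})\simeq\operatorname{Aut}M$ by commuting $\Omega$ past the left-exact sheafification and computing the loop space objectwise via $\Omega K(G,1)=G$, before applying $B$ from Theorem \ref{classifyingspace}. Even your closing caveat about making the identification at the level of fibrant objects mirrors the paper's appeal to Theorem 4.2.4.1 of \cite{htt} for computing the loop space as a homotopy limit in $(\operatorname{Set}_{\Delta}^{\operatorname{Spec}R_{\operatorname{Nis}}^{\operatorname{op}}})^{\circ}$.
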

\begin{proof}
The proof goes similarly to the proof of Theorem \ref{geometricdelooping}, once we notice that $a(N\overline{\operatorname{Aut}M})$ is a connected pointed object with its loop space object equivalent to $\operatorname{Aut}M$. 
With the obvious pointing $\ast\to N\overline{\operatorname{Aut}M}$ we have that $N\overline{\operatorname{Aut}M}$ is a pointed object, and so is its sheafification $a(N\overline{\operatorname{Aut}M})$.  
Recall the general fact that for every ordinary group $G$, the Kan complex $N\overline{G}$ is the Eilenberg-MacLane space $K(G,1)$, where $\overline{G}$ denotes the groupoid with a single object and morphisms given by elements of $G$. 
The $0$-th homotopy sheaf $\pi_0a(N\overline{\operatorname{Aut}M})$ is given by sheafifying the presheaf $R^{\prime}\mapsto\pi_0(N\overline{\operatorname{Aut}M}(R^{\prime}))$, and this vanishes since the Eilenberg-MacLane space $N\overline{G}=K(G,1)$ is always connected. 
Since the sheafification functor commutes with finite limits, the loop space $\Omega(a(N\overline{\operatorname{Aut}M}))$ is the sheafification of the loop space $\Omega(N\overline{\operatorname{Aut}M})$, which can be computed as the homotopy limit in the simplicial category $(\operatorname{Set}_{\Delta}^{\operatorname{Spec}R_{\operatorname{Nis}}^{\operatorname{op}}})^{\circ}$ by Theorem 4.2.4.1 of \cite{htt}.  
This in turn can be computed object-wise on the simplicial presheaf $R^{\prime}\mapsto N\overline{\operatorname{Aut}_{(\displaystyle\lim_{\longleftrightarrow}\mathcal{P}(R^{\prime}))^{\natural}}M\otimes_RR^{\prime}}=K(\operatorname{Aut}_{(\displaystyle\lim_{\longleftrightarrow}\mathcal{P}(R^{\prime}))^{\natural}}M\otimes_RR^{\prime},1)$, 
and using the general fact that $\Omega K(G,1)=G$ we get the desired conclusion. 
\end{proof} 
Now, recall that for a group object $G$ of an $\infty$-topos $\mathfrak{X}$, giving a $G$-action on an object $P\in\operatorname{ob}\mathfrak{X}$ is equivalent to giving a fiber sequence $P\to X\to BG$, i.e. to describing $P$ as a pullback $P=\varprojlim(X\rightarrow BG\leftarrow\ast)$. 
(See Definition \ref{action} and following discussions in section 2.)  
  
Hence, constructing the desired the $\operatorname{Aut}M$-action on the $\mathcal{K}$-torsor $\mathfrak{D}_M$ amounts to describe $\mathfrak{D}_M$ as a pullback $\mathfrak{D}_M=\varprojlim(X\rightarrow B\operatorname{Aut}M\leftarrow\operatorname{Spec}R)$ for some $X$ and some map $X\to B\operatorname{Aut}M$. 
It turns out that it suffices to have a map $[[M]]:B\operatorname{Aut}M\to B\mathcal{K}$ whose precomposition with the base-point map $\operatorname{Spec}R\to B\operatorname{Aut}M$ is equivalent to the map $[M]:\operatorname{Spec}R\to B\mathcal{K}$ classifying the $\mathcal{K}$-torsor $\mathfrak{D}_M$. 
Indeed, the successive pullback $\varprojlim(X\rightarrow B\operatorname{Aut}M\stackrel{(\text{base-point})}{\leftarrow}\operatorname{Spec}R)$, where $X=\varprojlim(B\operatorname{Aut}M\stackrel{[[M]]}{\rightarrow}B\mathcal{K}\stackrel{(\text{base-point})}{\leftarrow}\operatorname{Spec}R)$, is given by $\mathfrak{D}_M$ if $[[M]]\circ(\text{base-point})=[M]$:  
\begin{displaymath}
\begin{CD}
\mathfrak{D}_M@>>> X@>>>\operatorname{Spec}R\\
@VVV@VVV@VV{\text{base-point}}V\\
\operatorname{Spec}R@>{\text{base-point}}>>B\operatorname{Aut}M@>{[[M]]}>>B\mathcal{K}.\\
\end{CD}
\end{displaymath}
We thus get a fiber sequence $\mathfrak{D}_M\to\operatorname{Spec}R\to B\operatorname{Aut}M$, i.e. the description $\mathfrak{D}_M=\varprojlim(X\rightarrow B\operatorname{Aut}M\leftarrow\operatorname{Spec}R)$, as desired. 

To find such a map $[[M]]$, we notice that, in general, for any idempotent complete exact category $\mathcal{A}$ and an object $a$ of $\mathcal{A}$ the space $N\overline{\operatorname{Aut}_{\mathcal{A}}a}$ admits a natural, canonical map to the space $\Omega\abs{iS_{\bullet}(\mathcal{A})}=\Omega^{\infty}\mathbb{K}(\mathcal{A})$, where $S_{\bullet}$ denotes Waldhausen's $S_{\bullet}$-construction (\cite{waldhausen}, 1.3), $i(-)$ the subcategory of isomorphisms, and $\abs{-}$ the geometric realization.   
This is the composition of the map $N\overline{\operatorname{Aut}_{\mathcal{A}}a}\to Ni\mathcal{A}$ (recall that we write $i\mathcal{A}$ for the subcategory of isomorphisms) with the first structure map $Ni\mathcal{A}\to\Omega \abs{iS_{\bullet}(\mathcal{A})}$ of Waldhausen's connective algebraic $K$-theory spectrum (\cite{waldhausen}, 1.3). 
Applying this construction to $M\otimes_RR^{\prime}\in\operatorname{ob}(\displaystyle\lim_{\longleftrightarrow}\mathcal{P}(R^{\prime}))^{\natural}$ for \'{e}tale $R$-algebras $R^{\prime}$, we get a map of simplicial presheaves $N\overline{\operatorname{Aut}_{(\displaystyle\lim_{\longleftrightarrow}\mathcal{P}(-))^{\natural}}M\otimes_R(-)}\to\Omega^{\infty}\mathbb{K}((\displaystyle\lim_{\longleftrightarrow}\mathcal{P}(-))^{\natural})$.  
Via the equivalence $\theta$ this defines a map $N\overline{\operatorname{Aut}M}\to\mathcal{K}_{\operatorname{Tate}}$ in $\operatorname{Preshv}_{(\operatorname{Spaces})}(N\operatorname{Spec}R_{\operatorname{Nis}})$, which in turn induces a map $[[M]]:B\operatorname{Aut}M\cong a(N\overline{\operatorname{Aut}M})\to\mathcal{K}_{\operatorname{Tate}}\cong B\mathcal{K}$. 
Note that the precomposition of the map of simplicial presheaves $N\overline{\operatorname{Aut}_{(\displaystyle\lim_{\longleftrightarrow}\mathcal{P}(-))^{\natural}}M\otimes_R(-)}\to\Omega^{\infty}\mathbb{K}((\displaystyle\lim_{\longleftrightarrow}\mathcal{P}(-))^{\natural})$ with the canonical pointing $\operatorname{Spec}R\to N\overline{\operatorname{Aut}_{(\displaystyle\lim_{\longleftrightarrow}\mathcal{P}(-))^{\natural}}M\otimes_R(-)}$ corresponds to the point $[M]\in\Omega\mathbb{K}((\displaystyle\lim_{\longleftrightarrow}\mathcal{P}(R))^{\natural})$, so that the map $[[M]]$ satisfies the desired property $[[M]]\circ(\text{base-point})=[M]$. 
The proof of Theorem \ref{autmaction} is complete.

\begin{flushleft} 
Sho Saito\\
Graduate School of Mathematics\\
Nagoya University\\
Nagoya, Japan\\
\texttt{sho.saito@math.nagoya-u.ac.jp}
\end{flushleft}
\end{document}